\documentclass[12pt,a4paper]{amsart}

\newtheorem{theo+}              {Theorem}           [section]
\newtheorem{prop+}  [theo+]     {Proposition}
\newtheorem{coro+}  [theo+]     {Corollary}
\newtheorem{lemm+}  [theo+]     {Lemma}
\newtheorem{exam+}  [theo+]     {Example}
\newtheorem{rema+}  [theo+]     {Remark}
\newtheorem{defi+}  [theo+]     {Definition}
\def \r{\mbox{${\mathbb R}$}}

\newenvironment{theorem}{\begin{theo+}}{\end{theo+}}
\newenvironment{proposition}{\begin{prop+}}{\end{prop+}}
\newenvironment{corollary}{\begin{coro+}}{\end{coro+}}
\newenvironment{lemma}{\begin{lemm+}}{\end{lemm+}}
\newenvironment{definition}{\begin{defi+}}{\end{defi+}}

\usepackage{amsthm}
\theoremstyle{plain} \theoremstyle{remark}
\newtheorem{remark}{Remark}
\newtheorem{example}{Example}

\newtheorem*{ack}{\bf Acknowledgments}

\def\E{/\kern-1.0em \equiv }

\evensidemargin  5mm \oddsidemargin  5mm \textwidth  145mm
\textheight 209mm

\linespread{1.1}

\title{On $f$-harmonic morphisms between Riemannian manifolds}
\author{Ye-Lin Ou $^{*}$}
\thanks{$^{*}$Supported by
NSF of Guangxi (P. R. China), 2011GXNSFA018127.}
\address{School of Mathematics and Computer Science,\newline\indent Guangxi
University for Nationalities,
\newline\indent 188 East Daxue Road,\newline\indent Nanning, Guangxi 530006,\newline\indent P. R. China
\newline\indent E-mail:yelinou@hotmail.com}
\begin{document}

\title[$f$-Harmonic morphisms]{On $f$-harmonic morphisms between Riemannian manifolds}

\subjclass{58E20, 53C12} \keywords{$f$-harmonic maps, $f$-harmonic
morphisms, $F$-harmonic maps, harmonic morphisms, $p$-harmonic
morphisms.}
\date{03/28/2011}
\maketitle

\section*{Abstract}
\begin{quote}
{\footnotesize $f$-Harmonic maps were first introduced and studied
by Lichnerowicz in \cite{Li} (see also Section 10.20 in
Eells-Lemaire's report \cite{EL}). In this paper, we study a
subclass of $f$-harmonic maps called $f$-harmonic morphisms which
pull back local harmonic functions to local $f$-harmonic functions.
We prove that a map between Riemannian manifolds is an $f$-harmonic
morphism if and only if it is a horizontally weakly conformal
$f$-harmonic map. This generalizes the well-known Fuglede-Ishihara
characterization for harmonic morphisms. Some properties and many
examples as well as some non-existence of $f$-harmonic morphisms are
given. We also study the $f$-harmonicity of conformal immersions.}
\end{quote}

\section{$f$-Harmonic maps vs. $F$-harmonic maps}
\noindent {\bf 1.1 $f$-harmonic maps} \\
Let $f:(M,g)\longrightarrow (0,\infty)$ be a smooth function. An
{\em $f$-harmonic map} is a map $\phi: (M^m,g)\longrightarrow
(N^n,h)$ between Riemannian manifolds such that $\phi|_\Omega$ is a
critical point of the $f$-energy (see \cite{Li}, and \cite{EL},
Section 10.20)
\begin{equation}\notag
E_f(\phi)=\frac{1}{2}\int_\Omega f\,|{\rm d}\phi|^2dv_g,
\end{equation}
for every compact domain $\Omega\subseteq M$. The Euler-Lagrange
equation gives the $f$-harmonic map equation ( \cite{Co1},
\cite{OND})
\begin{equation}\label{fhm}
\tau_f(\phi)\equiv f\tau(\phi)+{\rm d}\phi({\rm grad}\,f)=0,
\end{equation}
where $\tau(\phi)={\rm Tr}_g\nabla\,d \phi$ is the tension field of
$\phi$ vanishing of which means $\phi$ is a harmonic map.
\begin{example}
Let $\varphi, \;\psi,\; \phi:\r^3\longrightarrow \r^2$ be defined as
\begin{eqnarray}\notag
\varphi(x, y, z)&=& (x, y),\\\notag \psi(x, y, z)&=& (3x,
xy),\;\;\;{\rm and}\\\notag \phi(x,y,z)&=&(x, y+z).
\end{eqnarray}
Then, one can easily check that both $\varphi$ and $\psi$ are
$f$-harmonic map with $f=e^z$, $\varphi$ is a horizontally conformal
submersion whilst $\psi$ is not. Also, $\phi$ is an $f$-harmonic map
with $f=e^{y-z}$, which is a submersion but not horizontally weakly
conformal.
\end{example}
\noindent{\bf 1.2 F-harmonic map}\\
Let $F:[0,+\infty)\longrightarrow {[}0,{+}\infty)$ be a
$C^2$-function, strictly increasing on $(0,+\infty)$, and let
$\varphi: (M,g)\longrightarrow (N,h)$ be a smooth map between
 Riemannian manifolds. Then $\varphi$ is said to be  an {\em
$F$-harmonic map} if $\varphi|_\Omega$ is a critical point of the
$F$-energy functional
\begin{equation}\notag
E_F(\varphi)=\int_\Omega F(\frac{|d\varphi|^2}{2})\,v_g,
\end{equation}
for every compact domain $\Omega\subseteq M$. The equation of
$F$-harmonic maps is given by (\cite{Ar})
\begin{equation}\label{Fhm}
\tau_F(\varphi)\equiv
F'(\frac{|d\varphi|^2}{2})\tau(\varphi)+\varphi_{*}\left({\rm
grad}\,F'(\frac{|d\varphi|^2}{2})\right)=0,
\end{equation}
where $\tau(\varphi)$ denotes the tension field of $\varphi$.\\

{\em Harmonic maps}, {\em $p$-harmonic maps}, and {\em exponential
harmonic maps} are examples of $F$-harmonic maps with $F(t) = t$,
$F(t) =\frac{1}{p}(2t)^{p/2} \;(p > 4)$, and
$F(t)= e^t$ respectively (\cite{Ar}).\\

In particular, $p$-harmonic map equation can be written as
\begin{equation}\label{PT}
\tau_p(\varphi)= {\left|d \varphi \right|}^{p-2}\tau(\varphi)+d
\varphi ({\rm grad}{\left|d \varphi \right|}^{p-2})=0,
\end{equation}
{\bf 1.3 Relationship between $f$-harmonic and $F$-harmonic maps}\\
We can see from Equation (\ref{fhm}) that an $f$-harmonic map with
$f={\rm constant} >0$ is nothing but a harmonic map so both
$f$-harmonic maps and $F$-harmonic maps are generalizations of
harmonic maps. Though we were warned in \cite{Co1} that $f$-harmonic
maps should not be confused with $F$-harmonic maps and $p$-harmonic
maps, we observe that, apart from critical points, any $F$-harmonic
map is a special $f$-harmonic maps. More precisely we have
\begin{corollary}\label{C1}
Any $F$-harmonic map $\varphi: (M,g)\longrightarrow (N,h)$ without
critical points, i.e., $|d\varphi_x|\ne 0$ for all $x\in M$, is an
$f$-harmonic map with $f=F'(\frac{|d\varphi|^2}{2})$. In particular,
a $p$-harmonic map without critical points is an $f$-harmonic map
with $f=|d\varphi|^{p-2}$
\end{corollary}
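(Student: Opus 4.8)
The plan is to prove the corollary by a direct term-by-term comparison of the two Euler--Lagrange equations, equations \eqref{Fhm} and \eqref{fhm}, after making the natural identification $f=F'(\tfrac{|d\varphi|^2}{2})$. First I would set $f:=F'(\tfrac{|d\varphi|^2}{2})$, regarded as a function on $M$ via $x\mapsto F'(\tfrac{|d\varphi_x|^2}{2})$, and record what must be checked for the statement to make sense: that $f$ is a genuine positive function of the regularity required by the definition of $f$-harmonic maps, and that, with this choice, the $F$-harmonic equation becomes literally the $f$-harmonic equation.

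Second, I would verify the positivity and regularity of $f$, which is the only place the hypotheses are actually used. Since $\varphi$ has no critical points, $|d\varphi_x|\ne 0$ for every $x\in M$, so the argument $\tfrac{|d\varphi_x|^2}{2}$ lies in $(0,+\infty)$; because $F$ is $C^2$ and strictly increasing on $(0,+\infty)$ we have $F'>0$ there, whence $f>0$ on all of $M$. The same observations, together with the smoothness of $x\mapsto|d\varphi_x|^2$ and the $C^2$-regularity of $F$, show that $f$ is well defined and at least $C^1$, so equation \eqref{fhm} is meaningful for it.

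Third comes the comparison itself. Substituting $f=F'(\tfrac{|d\varphi|^2}{2})$ into \eqref{Fhm} and recalling that $\varphi_*(\cdot)=d\varphi(\cdot)$ is just the differential, the $F$-harmonic equation reads
\[
f\,\tau(\varphi)+d\varphi\bigl({\rm grad}\,f\bigr)=0,
\]
which is exactly the $f$-harmonic equation \eqref{fhm} for this $f$; hence $\varphi$ is $f$-harmonic. For the $p$-harmonic assertion I would specialize $F(t)=\tfrac1p(2t)^{p/2}$, compute $F'(t)=(2t)^{p/2-1}$, and evaluate at $t=\tfrac{|d\varphi|^2}{2}$ to obtain $f=|d\varphi|^{p-2}$; alternatively one compares \eqref{PT} directly with \eqref{fhm}.

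I expect no serious obstacle: the corollary is essentially the observation that the two variational equations share the same shape once $F'$ is read as the weight $f$. The only point requiring genuine care is the positivity and regularity of $f$, and this is precisely what the no-critical-points hypothesis guarantees --- without it $f$ could vanish, or fail to be differentiable, at points where $|d\varphi|=0$, since $F'$ is only controlled on the open half-line $(0,+\infty)$.
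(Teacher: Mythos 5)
Your proposal is correct and follows essentially the same route as the paper's own proof: identify $f=F'(\frac{|d\varphi|^2}{2})$, use the absence of critical points together with the strict monotonicity of $F$ on $(0,+\infty)$ to ensure $f$ is positive and sufficiently regular, observe that equation (\ref{Fhm}) then coincides with equation (\ref{fhm}), and specialize $F(t)=\frac{1}{p}(2t)^{p/2}$ for the $p$-harmonic case. Your added remark on the precise regularity of $f$ (only $C^1$ is guaranteed when $F$ is merely $C^2$) is a slightly more careful statement than the paper's claim that $f$ is smooth, but the argument is otherwise the same.
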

\begin{proof}
Since $F$ is a $C^2$-function and strictly increasing on
$(0,+\infty)$ we have $F'(t)>0$ on $(0,+\infty)$. If the
$F$-harmonic map $\varphi: (M,g)\longrightarrow (N,h)$ has no
critical points, i.e., $|d\varphi_x|\ne 0$ for all $x\in M$, then
the function $f:(M,g)\longrightarrow (0,+\infty)$ with
$f=F'(\frac{|d\varphi|^2}{2})$ is a smooth and we see from Equations
(\ref{Fhm}) and (\ref{fhm}) that the $F$-harmonic map $\varphi$ is
an $f$-harmonic map with $f=F'(\frac{|d\varphi|^2}{2})$. The second
statement follows from the fact that for a $p$-harmonic map, $F(t)
=\frac{1}{p}(2t)^{p/2} $ and hence
$f=F'(\frac{|d\varphi|^2}{2})=|d\varphi|^{p-2}$.

\end{proof}

Another relationship between $f$-harmonic maps and harmonic maps can
be characterized as follows.
\begin{corollary}\label{C2}
A map $\phi: (M^m,g)\longrightarrow (N^n,h)$ is $f$-harmonic if and
only if $\phi: (M^m,f^{\frac{2}{m-2}}g)\longrightarrow (N^n,h)$ is a
harmonic map.
\end{corollary}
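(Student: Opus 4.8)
The plan is to read the metric $f^{\frac{2}{m-2}}g$ as a conformal deformation of $g$ on the domain and to exploit the classical transformation law for the tension field under a conformal change of the \emph{domain} metric. Writing $\bar g = f^{\frac{2}{m-2}}g = e^{2\sigma}g$ with $\sigma = \frac{1}{m-2}\log f$ (this already forces $m\neq 2$, which is built into the statement), the whole corollary should reduce to comparing $\tau_{\bar g}(\phi)$ with $\tau_g(\phi)$ and recognizing the extra term as exactly the gradient correction appearing in \eqref{fhm}.

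The key ingredient I would invoke is the standard formula: for $\bar g = e^{2\sigma}g$ on an $m$-dimensional domain,
\begin{equation}\notag
\tau_{\bar g}(\phi) = e^{-2\sigma}\Bigl(\tau_g(\phi) + (m-2)\,d\phi\bigl({\rm grad}\,\sigma\bigr)\Bigr),
\end{equation}
where all gradients are taken with respect to $g$. First I would substitute $\sigma = \frac{1}{m-2}\log f$, so that ${\rm grad}\,\sigma = \frac{1}{(m-2)f}\,{\rm grad}\,f$ and hence $(m-2)\,d\phi({\rm grad}\,\sigma) = \frac{1}{f}\,d\phi({\rm grad}\,f)$. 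With $e^{-2\sigma} = f^{-\frac{2}{m-2}}$ this yields
\begin{equation}\notag
\tau_{\bar g}(\phi) = f^{-\frac{2}{m-2}}\Bigl(\tau_g(\phi) + \tfrac{1}{f}\,d\phi({\rm grad}\,f)\Bigr) = \frac{f^{-\frac{2}{m-2}}}{f}\,\Bigl(f\,\tau_g(\phi) + d\phi({\rm grad}\,f)\Bigr) = \frac{f^{-\frac{2}{m-2}}}{f}\,\tau_f(\phi).
\end{equation}

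Since $f>0$, the scalar factor $f^{-\frac{2}{m-2}}/f$ is everywhere positive, so $\tau_{\bar g}(\phi)=0$ if and only if $\tau_f(\phi)=0$; by the definition of $f$-harmonicity in \eqref{fhm} this is precisely the asserted equivalence. I expect the only genuine obstacle to be the correct invocation (or, if one prefers a self-contained argument, the derivation) of the conformal transformation law for the tension field with the right dimensional constant $(m-2)$ and the right power of the conformal factor; once that formula is in hand the remaining steps are an elementary substitution and the observation that a strictly positive factor does not affect vanishing. I would therefore either cite the standard reference for this law or include a brief derivation via the change of the Levi-Civita connection and the volume element under $g\mapsto e^{2\sigma}g$.
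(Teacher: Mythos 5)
Your proposal is correct and follows essentially the same route as the paper: the paper's proof consists precisely of the identity $\tau(\phi,\bar g)=f^{-\frac{m}{m-2}}\,\tau_f(\phi,g)$ (stated there as a ``straightforward computation''), which is exactly what you derive, since your factor $f^{-\frac{2}{m-2}}/f$ equals $f^{-\frac{m}{m-2}}$. Your write-up is in fact slightly more self-contained, as you supply the conformal transformation law and conclude both directions from positivity of the factor, whereas the paper cites Lichnerowicz for one implication.
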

\begin{proof}
The statement that the $f$-harmoninicity of $\phi:
(M^m,g)\longrightarrow (N^n,h)$ implies the harmonicity of $\phi:
(M^m,f^{\frac{2}{m-2}}g)\longrightarrow (N^n,h)$ was stated and
proved in \cite{Li} (see also Section 10.20 in \cite{EL}). It is not
difficult to see that the converse is also true. In fact, let
$\bar{g}=f^{\frac{2}{m-2}}g$, a straightforward computation shows
that
\begin{eqnarray}\notag
 \tau(\varphi,{\bar g}) =
f^{\frac{-m}{m-2}}\tau_f (\varphi, g),
\end{eqnarray}
which completes the proof of the corollary.
\end{proof}
\noindent{\bf 1.4 A physical motivation for the study of
$f$-harmonic maps:} In physics, the equation of motion of a
continuous system of spins with inhomogeneous neighbor Heisenberg
interaction (such a model is called the inhomogeneous Heisenberg
ferromagnet) is given by
\begin{equation}\label{IHHS}
\frac{\partial u}{\partial t}=f(x)(u\times \Delta
u)+\nabla\,f\cdot(u\times \nabla\,u),
\end{equation}
where $\Omega\subseteq\r^m$ is a smooth domain in the Euclidean
space, $f$ is a real-valued function defined on $\Omega$, $u(x, t)
\in S^2$, $\times$ denotes the cross products in $\r^3$ and $\Delta$
is the Laplace operator on $\r^m$. Physically, the function $f$ is
called the coupling function, and is the continuum limit of the
coupling constants between the neighboring spins. Since $u$ is a map
into $S^2$ it is well known that the tension field of $u$ can be
written as $\tau (u)=\Delta u + |\nabla\,u|^2u$, and one can easily
check that the right hand side of the inhomogeneous Heisenberg spin
system (\ref{IHHS}) can be written as $ u \times (f\tau
(u)+\nabla\,f \cdot \nabla\,u)$. It follows that $u$ is a smooth
stationary solution of (\ref{IHHS}) if and only if $f\tau (u) +
\nabla\,f \cdot \nabla\,u = 0$, i.e., $u$ is an $f$-harmonic map. So
there is a one-to-one correspondence between the set of the
stationary solutions of the inhomogeneous Heisenberg spin system
(\ref{IHHS}) on the domain $\Omega$ and the set of $f$-harmonic maps
from $\Omega$ into $2$-sphere. The above inhomogeneous Heisenberg
spin system (\ref{IHHS}) is also called inhomogeneous
Landau-Lifshitz system (see, e.g., \cite{CSW}, \cite{CGS},
\cite{DPL}, \cite{HT}, \cite{LB}, \cite{LW}, for more details).\\

Using Corollary \ref{C2} we have the following example which
provides many stationary solutions of the inhomogeneous Heisenberg
spin system defined on $\r^3$.
\begin{example}\label{Ex2}
 $u:(\r^3, ds_0)\longrightarrow (N^{n},h)$ is an $f$-harmonic map if and only
if \\$u:(\r^3, f^2ds_{0})\longrightarrow (N^{n},h)$ is a harmonic
map. In particular, there is a 1-1 correspondence between harmonic
maps from 3-sphere $S^3\setminus\{N\}\equiv (\r^3,
\frac{4ds_0}{(1+|x|^2)^2})\longrightarrow (N^{n},h)$ and
$f$-harmonic maps with $f=\frac{2}{1+|x|^2}$ from Euclidean 3-space
$\r^3\longrightarrow (N^{n},h)$. When $(N^n,h)=S^2$, we have a $1-1$
correspondence between the set of harmonic maps $S^3\longrightarrow
S^2$ and the set of stationary solutions of the inhomogeneous
Heisenberg spin system on $\r^3$. Similarly, there is a 1-1
correspondence between harmonic maps from hyperbolic 3-space
$H^3\equiv (D^3, \frac{4ds_0}{(1-|x|^2)^2})\longrightarrow (N^n,h)$
and $f$-harmonic maps $(D^3,ds_0)\longrightarrow (N^n,h)$ with
$f=\frac{2}{1-|x|^2}$ from unit disk in Euclidean 3-space.
\end{example}
\noindent{\bf 1.5 A little more about $f$-harmonic maps}
\begin{corollary}\label{2f}
If $\phi: (M^m,g)\longrightarrow (N^n,h)$ is an $f_1$-harmonic map
and also an $f_2$-harmonic map, then ${\rm grad}(f_1/f_2 )\in
\ker\,{\rm d} \phi$.
\end{corollary}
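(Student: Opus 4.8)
The plan is to exploit the fact that the $f$-harmonic map equation (\ref{fhm}) is linear in the tension field $\tau(\phi)$, and to eliminate $\tau(\phi)$ between the two hypotheses. First I would simply write down the two equations that the map is assumed to satisfy. Since $\phi$ is simultaneously $f_1$-harmonic and $f_2$-harmonic, Equation (\ref{fhm}) applied with $f=f_1$ and with $f=f_2$ yields
\[
f_1\,\tau(\phi)+{\rm d}\phi({\rm grad}\,f_1)=0
\qquad\text{and}\qquad
f_2\,\tau(\phi)+{\rm d}\phi({\rm grad}\,f_2)=0 .
\]

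The next step is to cancel the common unknown $\tau(\phi)$. Multiplying the first equation by $f_2$ and the second by $f_1$ and subtracting, the terms $f_1 f_2\,\tau(\phi)$ cancel, and using the $\r$-linearity of ${\rm d}\phi$ to pull the scalar functions inside gives
\[
{\rm d}\phi\bigl(f_2\,{\rm grad}\,f_1-f_1\,{\rm grad}\,f_2\bigr)=0 .
\]
Finally I would recognize the bracketed vector field as $f_2^{\,2}\,{\rm grad}(f_1/f_2)$, by the quotient rule for the gradient. Since $f_2$ is everywhere positive, dividing through by $f_2^{\,2}$ is legitimate and produces ${\rm d}\phi\bigl({\rm grad}(f_1/f_2)\bigr)=0$, which is precisely the assertion that ${\rm grad}(f_1/f_2)\in\ker\,{\rm d}\phi$.

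There is no genuine obstacle in this argument; it is essentially a one-line elimination once the two defining equations are placed side by side. The only point worth flagging is the standing hypothesis that $f_1,f_2:M\to(0,\infty)$ take strictly positive values. This positivity is used twice: it guarantees that the quotient $f_1/f_2$ is a well-defined smooth function whose gradient makes sense, and it permits the final division by $f_2^{\,2}$. No curvature, conformality, or critical-point considerations enter, so the conclusion holds at every point of $M$ without further assumptions on $\phi$.
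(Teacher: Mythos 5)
Your proof is correct and takes essentially the same route as the paper: both arguments eliminate the common tension field $\tau(\phi)$ between the two equations $f_i\tau(\phi)+{\rm d}\phi({\rm grad}\,f_i)=0$. The only cosmetic difference is that the paper divides each equation by its weight and concludes ${\rm d}\phi({\rm grad}\,{\rm ln}\,(f_1/f_2))=0$, while your cross-multiplication lands directly on ${\rm d}\phi({\rm grad}(f_1/f_2))=0$; since $f_1/f_2>0$ these two gradients are positive scalar multiples of each other at every point, so the conclusions agree (and your version matches the stated claim a bit more literally).
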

\begin{proof}
This follows from
\begin{eqnarray}\notag
\tau_{f_1}(\phi)\equiv f_1\tau(\phi)+{\rm d}\phi({\rm
grad}\,f_1)=0,\\\notag \tau_{f_2}(\phi)\equiv f_2\tau(\phi)+{\rm
d}\phi({\rm grad}\,f_2)=0,
\end{eqnarray}
 and hence
\begin{eqnarray}\notag
{\rm d}\phi({\rm grad\,ln}\,(f_1/f_2))=0.
\end{eqnarray}
\end{proof}

\begin{proposition}
 A conformal immersion $\phi: (M^m,g)\longrightarrow (N^{n},h)$
with $\phi^{*}h=\lambda^{2}g$ is $f$-harmonic if and only if it is
$m$-harmonic and $f=C\lambda^{m-2}$. In particular, an isometric
immersion is $f$-harmonic if and only if $f=const$ and hence it is
harmonic.
\end{proposition}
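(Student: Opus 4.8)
The plan is to prove both implications by decomposing the tension field of the conformal immersion into its parts tangent and normal to the image. First I would record the energy density: choosing a local $g$-orthonormal frame $\{e_i\}$ and using $\phi^{*}h=\lambda^{2}g$ gives $|d\phi|^{2}=\sum_i h(d\phi\,e_i,d\phi\,e_i)=m\lambda^{2}$, hence $|d\phi|^{m-2}=m^{\frac{m-2}{2}}\lambda^{m-2}$. Thus the condition $f=C\lambda^{m-2}$ says precisely that $f$ is a constant multiple of $|d\phi|^{m-2}$. This already settles the easy (backward) direction: if $\phi$ is $m$-harmonic and $f=C\lambda^{m-2}=C\,m^{-\frac{m-2}{2}}|d\phi|^{m-2}$, then multiplying the $m$-harmonic equation (\ref{PT}) by the constant $C\,m^{-\frac{m-2}{2}}$ turns it termwise into $f\,\tau(\phi)+d\phi({\rm grad}\,f)=0$, which is exactly the $f$-harmonic equation (\ref{fhm}).

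For the forward direction I would compute $\tau(\phi)$ via the conformal change formula for the domain metric. Writing $\bar g=\lambda^{2}g=\phi^{*}h$, the map $\phi:(M,\bar g)\to(N,h)$ is an isometric immersion, so its tension field $\tau(\phi,\bar g)$ is (up to normalization) the mean curvature vector and is therefore everywhere normal to $d\phi(TM)$. The standard transformation law in dimension $m$ reads $\tau(\phi,\bar g)=\lambda^{-2}\{\tau(\phi,g)+(m-2)\,d\phi({\rm grad}\,\ln\lambda)\}$, which I rearrange to exhibit the orthogonal splitting of $\tau(\phi,g)$: its tangential part is $-(m-2)\,d\phi({\rm grad}\,\ln\lambda)$ and its normal part is $\lambda^{2}\tau(\phi,\bar g)$.

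Now I would substitute this into $f\,\tau(\phi)+d\phi({\rm grad}\,f)=0$ and separate components. Since $d\phi({\rm grad}\,f)$ is tangent to the image, the normal component forces $\lambda^{2}f\,\tau(\phi,\bar g)=0$; as $f,\lambda>0$ this says the immersion is minimal. The tangential component is $-(m-2)f\,d\phi({\rm grad}\,\ln\lambda)+d\phi({\rm grad}\,f)=0$, and injectivity of $d\phi$ gives ${\rm grad}\,f=(m-2)f\,{\rm grad}\,\ln\lambda$, i.e. ${\rm grad}\,\ln f=(m-2)\,{\rm grad}\,\ln\lambda$. Integrating on each connected component of $M$ yields $f=C\lambda^{m-2}$. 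With this value of $f$ the backward computation run in reverse shows $\phi$ is $m$-harmonic, completing the equivalence. The main obstacle is this forward direction, and specifically the recognition that the tension field of the associated isometric immersion is purely normal; once the tangential/normal splitting is in place, both the formula $f=C\lambda^{m-2}$ and the $m$-harmonicity drop out by separating components and integrating a gradient identity.

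Finally, for the ``in particular'' statement I would specialize to $\lambda\equiv 1$. Then $f=C\lambda^{m-2}=C$ is constant and $|d\phi|^{m-2}$ is constant, so the $m$-harmonic equation collapses to $\tau(\phi)=0$; hence an $f$-harmonic isometric immersion must have $f$ constant and be harmonic (indeed minimal), while conversely a harmonic isometric immersion with constant $f$ is trivially $f$-harmonic.
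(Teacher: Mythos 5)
Your proof is correct, and its heart coincides with the paper's: decompose the tension field of the conformal immersion into a normal part (a multiple of the mean curvature vector) and a tangential part (the image under ${\rm d}\phi$ of a gradient), substitute into the $f$-harmonic equation (\ref{fhm}), and force each part to vanish separately, using injectivity of ${\rm d}\phi$ to pass from ${\rm d}\phi({\rm grad}\,\ln(\lambda^{2-m}f))=0$ to the gradient identity that integrates to $f=C\lambda^{m-2}$. Where you differ is in how the two key ingredients are justified. The paper simply quotes Takeuchi \cite{Ta} for the formula $\tau(\phi)=m\lambda^{2}\eta+(2-m)\,{\rm d}\phi({\rm grad}\,\ln\lambda)$, and quotes \cite{Ta} a second time for the fact that a minimal conformal immersion is $m$-harmonic. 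You instead derive the splitting yourself, via the conformal-change law for the tension field applied to $\bar g=\lambda^{2}g=\phi^{*}h$ (under which $\phi$ becomes an isometric immersion, whose tension field is purely normal), and you dispose of the $m$-harmonicity claim by the purely algebraic observation that $|{\rm d}\phi|^{2}=m\lambda^{2}$ makes the $f$-harmonic equation with $f=C\lambda^{m-2}$ a positive constant multiple of the $m$-harmonic equation (\ref{PT}). This buys self-containedness (no external citation is needed, and both directions of the equivalence are handled explicitly, whereas the paper's backward direction leans implicitly on the cited equivalence in \cite{Ta}), and it makes transparent that the proposition is really the conformal-immersion instance of the paper's Corollary \ref{C1} and Corollary \ref{pfhm} relating $p$-harmonic maps to $f$-harmonic maps with $f=|{\rm d}\phi|^{p-2}$. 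The only caveat, which you flag yourself and which the paper leaves implicit, is that the constant $C$ is determined per connected component of $M$.
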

\begin{proof}
It is not difficult to check (see also \cite{Ta}) that for a
conformal immersion $\phi: (M^m,g)\longrightarrow (N^{n},h)$ with
$\phi^{*}h=\lambda^{2}g$, the tension field is given by
\begin{eqnarray}\label{CTau}\notag
\tau(\phi)=m\lambda^2 \eta+(2-m){\rm d}\phi \left( {\rm grad}\, {\rm
ln} \lambda\right),
\end{eqnarray}
so we can compute the $f$-tension field to have
\begin{eqnarray}\label{fTau}\notag
\tau_f(\phi)=f[m\lambda^2 \eta+{\rm d}\phi \left( {\rm grad}\, {\rm
ln} (\lambda^{2-m}\,f)\right)],
\end{eqnarray}
where $\eta$ is the mean curvature vector of the submanifold $\phi
(M)\subset N$. Noting that $\eta$ is normal part whilst ${\rm d}\phi
\left( {\rm grad}\, {\rm ln} \lambda^{2-m}\,f\right)$ is the
tangential part of $\tau_f(\phi)$ we conclude that $\tau_f(\phi)=0$
if and only if
\begin{equation}
\begin{cases} m\lambda^2 \eta=0,\\\notag {\rm d}\phi \left(
{\rm grad}\, {\rm ln}( \lambda^{2-m}\,f)\right)=0.
\end{cases}
\end{equation}
It follows that $\eta=0$ and ${\rm grad}\,( {\rm ln}
(\lambda^{2-m}\,f))=0$ since $\phi$ is an immersion. From these we
see that $\phi$ is a minimal conformal immersion which means it is
an $m$-harmonic map (\cite{Ta}) and that $f=C\lambda^{m-2}$. Thus,
we obtain the first statement. The second statement follows from the
first one with $\lambda=1$.
\end{proof}

\section{$f$-Harmonic morphisms}
\indent A {\em horizontally weakly conformal} map is a map
$\varphi:(M, g)\longrightarrow (N, h)$ between Riemannian manifolds
such that for each $x \in M$ at which ${\rm d} \varphi_{x} \neq 0$,
the restriction ${\rm d} \varphi_{x}|_{H_x} : H_x \longrightarrow
T_{\varphi(x)} N$ is conformal and surjective, where the horizontal
subspace $H_{x}$ is the orthogonal complement of $V_x = {\rm ker}\,
{\rm d} \varphi_x$ in $T_{x}M$. It is not difficult to see that
there is a number $\lambda (x) \in (0,\infty)$ such that $h({\rm d}
\varphi (X),{\rm d} \varphi (Y)) = \lambda^{2}(x)g(X,Y)$ for any $X,
Y \in H_x$. At the point $x \in M$ where ${\rm d} \varphi_{x} = 0$
one can let $\lambda(x) = 0$ and obtain a continuous function
$\lambda : M \longrightarrow R$ which is called the {\em dilation}
of a horizontally weakly conformal map $\varphi$. A non-constant
horizontally weakly conformal map $\varphi$ is called {\em
horizontally homothetic} if the gradient of $\lambda^2(x) $ is
vertical meaning that $ X(\lambda^{2}) \equiv 0$ for any horizontal
vector field $X$ on $M$. Recall that a $C^{2}$ map $\varphi:(M,
g)\longrightarrow (N, h)$ is a {\em $p$-harmonic morphism} $(p > 1)$
if it preserves the solutions of $p$-Laplace equation in the sense
that for any $p$-harmonic function $f:U \longrightarrow \mathbb{R}$,
defined on an open subset U of $N$ with ${\varphi}^{-1}(U)$
non-empty, $f\circ\varphi :{\varphi}^{-1}(U) \longrightarrow
\mathbb{R} $ is a $p$-harmonic function. A $p$-harmonic morphism can
be characterized as a horizontally weakly conformal $p$-harmonic map
(see \cite{Fu}, \cite{Is}, \cite{Lo} for details).
\begin{definition}
Let $f : (M, g) \longrightarrow (0, \infty)$ be a smooth function. A
$C^2$-function $u : U \longrightarrow \r$ defined on an open subset
$U$ of $M$ is called $f$-harmonic if
\begin{equation}\label{fLa}
\Delta_f^M\, u\equiv f\Delta^M\, u + g({\rm grad}f, {\rm grad}\,u) =
0.
\end{equation}
A continuous map $\phi : (M^m, g) \longrightarrow (N^n, h)$ is
called an $f$-harmonic morphism if for every harmonic function $u$
defined on an open subset $V$ of $N$ such that $\phi^{-1}(V)$ is
non-empty, the composition $u \circ \phi$ is $f$-harmonic on
$\phi^{-1}(V)$.
\end{definition}

\begin{theorem}\label{cha}
Let $\phi : (M^m, g) \longrightarrow (N^n, h)$ be a smooth map.
Then, the following are equivalent:\\
$(1)$ $\phi$ is an $f$-harmonic morphism;\\
$(2)$ $\phi$ is a horizontally weakly conformal $f$-harmonic map;\\
$(3)$ There exists a smooth function $\lambda ^2$ on $M$ such that
\begin{eqnarray}\notag
\Delta_f^{M}(u\circ \phi) &=&f\lambda^2(\Delta^N\,u)\circ\phi
\end{eqnarray}
for any $C^2$-function $u$ defined on (an open subset of) $N$.
\end{theorem}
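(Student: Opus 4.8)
The engine for all three equivalences is a composition formula for the $f$-Laplacian, so my first step is to establish, for an arbitrary $C^2$ function $u$ on $N$ and a local orthonormal frame $\{e_i\}_{i=1}^m$ on $M$, the identity
\begin{equation}\label{engine}
\Delta_f^M(u\circ\phi)=f\sum_{i=1}^m {\rm Hess}^N u({\rm d}\phi(e_i),{\rm d}\phi(e_i))+{\rm d}u(\tau_f(\phi)).
\end{equation}
To derive it I would begin with the chain rule $\nabla{\rm d}(u\circ\phi)(X,Y)={\rm Hess}^N u({\rm d}\phi(X),{\rm d}\phi(Y))+{\rm d}u(\nabla{\rm d}\phi(X,Y))$, take the $g$-trace to get $\Delta^M(u\circ\phi)=\sum_i {\rm Hess}^N u({\rm d}\phi(e_i),{\rm d}\phi(e_i))+{\rm d}u(\tau(\phi))$, and then combine this with $g({\rm grad}\,f,{\rm grad}(u\circ\phi))={\rm d}u({\rm d}\phi({\rm grad}\,f))$ together with the definition $\tau_f(\phi)=f\tau(\phi)+{\rm d}\phi({\rm grad}\,f)$ from (\ref{fhm}). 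Each implication below is then read off from (\ref{engine}).

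The directions $(2)\Rightarrow(3)\Rightarrow(1)$ are purely computational. Assuming $(2)$, I would choose at each point an adapted orthonormal frame with $e_1,\dots,e_n$ horizontal and $e_{n+1},\dots,e_m$ vertical; horizontal weak conformality gives ${\rm d}\phi(e_j)=0$ for $j>n$ and ${\rm d}\phi(e_i)=\lambda\,\epsilon_i$ for an orthonormal frame $\{\epsilon_i\}$ on $N$, so the sum in (\ref{engine}) collapses to $\lambda^2(\Delta^N u)\circ\phi$, while $f$-harmonicity kills the term ${\rm d}u(\tau_f(\phi))$; this yields exactly the identity in $(3)$ with the stated $\lambda^2$, the critical set ${\rm d}\phi=0$ being handled by continuity since there $\lambda=0$. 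The step $(3)\Rightarrow(1)$ is immediate: if $u$ is harmonic then $\Delta^N u=0$, so $(3)$ forces $\Delta_f^M(u\circ\phi)=0$, i.e.\ $u\circ\phi$ is $f$-harmonic.

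The substantive direction is $(1)\Rightarrow(2)$, which is the Fuglede--Ishihara argument adapted to $\Delta_f^M$. Fix $x_0\in M$ and set $y_0=\phi(x_0)$. For every \emph{harmonic} $u$ near $y_0$ the left side of (\ref{engine}) vanishes, leaving the pointwise relation
\begin{equation}\label{point}
0=f(x_0)\sum_{i=1}^m {\rm Hess}^N u_{y_0}({\rm d}\phi_{x_0}(e_i),{\rm d}\phi_{x_0}(e_i))+{\rm d}u_{y_0}\big(\tau_f(\phi)_{x_0}\big).
\end{equation}
I would then feed in harmonic test functions with prescribed $2$-jet at $y_0$: for any covector $\xi$ and any symmetric bilinear form $B$ on $T_{y_0}N$ with ${\rm Tr}_h B=0$ there is a harmonic $u$ with ${\rm d}u_{y_0}=\xi$ and ${\rm Hess}^N u_{y_0}=B$. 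Taking $B=0$ with $\xi$ arbitrary in (\ref{point}) forces $\tau_f(\phi)_{x_0}=0$, so $\phi$ is $f$-harmonic. Taking $\xi=0$ with $B$ an arbitrary trace-free symmetric form and dividing by $f(x_0)>0$ gives $\sum_i B({\rm d}\phi(e_i),{\rm d}\phi(e_i))=0$ for all such $B$; hence the symmetric tensor $\sum_i {\rm d}\phi(e_i)\otimes{\rm d}\phi(e_i)$ is orthogonal to the trace-free part and so equals a multiple of the (inverse) metric, i.e.\ ${\rm d}\phi_{x_0}\circ{\rm d}\phi_{x_0}^{*}=\lambda^2\,{\rm Id}$, which is precisely horizontal weak conformality. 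The two conclusions together give $(2)$.

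The one genuinely non-trivial ingredient, and the step I expect to be the main obstacle, is the existence of harmonic functions on $N$ realizing any prescribed value, gradient, and trace-free Hessian at a point. This is a statement about the target alone and is independent of $f$, so it coincides with the classical lemma underlying the Fuglede--Ishihara characterization; I would establish it in geodesic normal coordinates at $y_0$, where the Laplacian is $\sum_a\partial^2/\partial(y^a)^2$ to leading order, by correcting a prescribed quadratic polynomial to a genuinely harmonic function while controlling its $2$-jet through local solvability of the Poisson equation. The positivity $f>0$ enters exactly once, in the division producing horizontal weak conformality; apart from that the whole argument is formally identical to the harmonic case with $\Delta^M$ and $\tau(\phi)$ replaced throughout by $\Delta_f^M$ and $\tau_f(\phi)$.
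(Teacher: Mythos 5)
Your proposal is correct and follows essentially the same route as the paper: the same composition formula for $\Delta_f^M$ (your ``engine'' identity is the invariant form of the paper's coordinate computations (\ref{TE0}) and (\ref{Comp})), the same Ishihara-type lemma producing local harmonic functions with prescribed gradient and trace-free Hessian to force both $\tau_f(\phi)=0$ and horizontal weak conformality in the direction $(1)\Rightarrow(2)$, and the same direct computations for $(2)\Rightarrow(3)\Rightarrow(1)$. The only differences are presentational: you work invariantly with the covariant Hessian and arbitrary trace-free forms $B$, whereas the paper works in local coordinates and substitutes specific choices of Ishihara's constants $C_\sigma$, $C_{\alpha\beta}$.
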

\begin{proof}
We will need the following lemma to prove the theorem.
\begin{lemma}\label{IsL}$($\cite{Is}$)$ For any point $q \in (N^n,h)$ and any constants
$C_\sigma ,\; C_{\alpha\beta}$ with $C_{\alpha\beta} =
C_{\beta\alpha}$ and $\sum_{\alpha=1}^n\,C_{\alpha\alpha} = O$,
there exists a harmonic function $u$ on a neighborhood of $q$ such
that $u_\sigma (q)= C_\sigma , u_{\alpha\beta}(q)= C_{\alpha\beta}$.
\end{lemma}
Let $\phi : (M^m, g) \longrightarrow (N^n, h)$ be a map and let
$p\in M$. Suppose that $\phi(x)=(\phi^1(x),\phi^2(x),\cdots,
\phi^n(x))$ is the local expression of $\phi$ with respect to the
local coordinates $\{x^i\}$ in the neighborhood $\phi^{-1}(V)$ of
$p$ and $\{y^\alpha\}$ in a neighborhood $V$ of $q=\phi(p)\in N$.
Let $u : V \longrightarrow \r$ defined on an open subset $V$ of $N$.
Then, a straightforward computation gives
\begin{eqnarray}\notag
\Delta_f^{M}(u\circ \phi)&=&f\Delta^{M}(u\circ \phi)+d(u\circ
\phi)({\rm grad} f)\\\notag &=&fu_{\alpha\beta}g({\rm
grad}\phi^{\alpha}, {\rm
grad}\phi^{\beta})+fu_{\alpha}\Delta^M\phi^{\alpha}+d(u\circ
\phi)({\rm grad} f)\\\label{TE0} &=&f g({\rm grad}\phi^{\alpha},
{\rm
grad}\phi^{\beta})u_{\alpha\beta}+[f\,\Delta^M\phi^{\sigma}+({\rm
grad} f )\phi^{\sigma}]u_\sigma.
\end{eqnarray}
By Lemma \ref{IsL}, we can choose a local harmonic function $u$ on
$V\subset N$ such that $u_\sigma (q)= C_\sigma=0 \;\forall\;
\sigma=1, 2, \cdots, n,\; u_{\alpha\beta}(q)=1\;\; (\alpha\ne
\beta), {\rm and\; all\; other}\;
u_{\rho\sigma}(q)=C_{\rho\sigma}=0$ and substitute it into
(\ref{TE0}) to have
\begin{eqnarray}\label{TE2}
g({\rm grad}\phi^{\alpha}, {\rm
grad}\phi^{\beta})=0,\;\forall\;\alpha\ne \beta=1, 2, \cdots, n.
\end{eqnarray}
Note that the choice of such functions implies
\begin{eqnarray}\label{Hd}
h^{\alpha\beta}(\phi(p))=0,\;\forall\;\alpha\ne \beta=1, 2, \cdots,
n.
\end{eqnarray}
Another choice of harmonic function $u$ with $C_{11}=1,
\;C_{\alpha\alpha}=-1\;\; (\alpha\ne 1)$ and all other $C_\sigma,
C_{\alpha\beta}=0$ for Equation (\ref{TE0}) gives
\begin{eqnarray}\label{TE3}
g({\rm grad}\phi^{1}, {\rm grad}\phi^{1})-g({\rm grad}\phi^{\alpha},
{\rm grad}\phi^{\alpha})=0,\;\forall\;\alpha\ne \beta= 2, 3, \cdots,
n.
\end{eqnarray}
Note also that for these choices of harmonic functions $u$ we have
\begin{eqnarray}\label{Hs}
h^{11}(\phi(p))-h^{\alpha\alpha}(\phi(p))=0,\;\forall\;\alpha\ne
\beta= 2, 3, \cdots, n.
\end{eqnarray}
It follows from (\ref{TE2}), (\ref{Hd}), (\ref{TE3}) and (\ref{Hs})
that the $f$-harmonic morphism $\phi$ is a horizontally weakly
conformal map
\begin{equation}\label{HWC}
g({\rm grad}\phi^{\alpha}, {\rm
grad}\phi^{\beta})=\lambda^2h^{\alpha\beta}\circ\phi.
\end{equation}
Substituting horizontal conformality equation (\ref{HWC}) into
(\ref{TE0}) we have
\begin{eqnarray}\notag
\Delta_f^{M}(u\circ \phi)&=& f\lambda^2
(h^{\alpha\beta}\circ\phi)u_{\alpha\beta}+[f\,\Delta^M\phi^{\sigma}+({\rm
grad} f )\phi^{\sigma}]u_\sigma\\\notag
&=&f\lambda^2(\Delta^N\,u)\circ\phi+[f\,\Delta^M\phi^{\sigma}+f\lambda^2(h^{\alpha\beta}{\bar\Gamma}_{\alpha\beta}^\sigma)\circ\phi+({\rm
grad} f )\phi^{\sigma}]u_\sigma\\\label{Comp}
&=&f\lambda^2(\Delta^N\,u)\circ\phi+\,{\rm d}u\,(\tau_f(\phi))
\end{eqnarray}
for any function $u$ defined (locally) on $N$. By special choice of
harmonic function $u$ we conclude that the $f$-harmonic morphism is
an $f$-harmonic map. Thus, we obtain the implication
$``(1)\Longrightarrow(2)"$. Note that the only assumption we used to
obtain Equation (\ref{Comp}) is the horizontal conformality
(\ref{HWC}). Therefore, it follows from (\ref{Comp}) that
$``(2)\Longrightarrow(3)"$. Finally, $``(3)\Longrightarrow(1)"$ is
clearly true. Thus, we complete the proof of the theorem.
\end{proof}
Similar to harmonic morphisms we have the following regularity
result.
\begin{corollary}
For $m\ge 3$, an $f$-harmonic morphism $\phi : (M^m, g)
\longrightarrow (N^n, h)$ is smooth.
\end{corollary}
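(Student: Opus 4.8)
The plan is to reduce the assertion to the classical regularity theorem for ordinary harmonic morphisms by means of a conformal change of the domain metric, in the spirit of Corollary \ref{C2}. Since $m\ge 3$ and $f$ is smooth and positive, $\bar g = f^{2/(m-2)}g$ is a genuine smooth Riemannian metric on $M$. First I would record that the $f$-Laplacian is in divergence form,
\[
\Delta_f^M v = f\Delta^M v + g({\rm grad}\,f,{\rm grad}\,v) = {\rm div}(f\,{\rm grad}\,v),
\]
and compare it with the Laplace--Beltrami operator of $\bar g$. Using ${\rm grad}_{\bar g}v = f^{-2/(m-2)}{\rm grad}_g v$ and $dv_{\bar g} = f^{m/(m-2)}dv_g$, a direct computation of the associated Dirichlet integrals gives
\[
\int_M \bar g({\rm grad}_{\bar g}v,{\rm grad}_{\bar g}\psi)\,dv_{\bar g} = \int_M f\,g({\rm grad}_g v,{\rm grad}_g\psi)\,dv_g
\]
for every test function $\psi$; equivalently, in the smooth case $\Delta^{\bar g}v = f^{-m/(m-2)}\Delta_f^M v$. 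Hence a continuous function is (weakly) $f$-harmonic on $(M,g)$ if and only if it is (weakly) harmonic on $(M,\bar g)$.

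From this equivalence I would deduce that $\phi:(M^m,g)\longrightarrow(N^n,h)$ is an $f$-harmonic morphism if and only if $\phi:(M^m,\bar g)\longrightarrow(N^n,h)$ is an (ordinary) harmonic morphism: for each harmonic function $u$ on an open subset of $N$, the continuous pull-back $u\circ\phi$ is $f$-harmonic with respect to $g$ precisely when it is harmonic with respect to $\bar g$. Once this identification is in place, the regularity follows from the Fuglede--Ishihara theory, according to which a continuous harmonic morphism out of an $m$-dimensional Riemannian manifold with $m\ge 3$ is automatically smooth (see \cite{Fu}). Finally, because smoothness of a map between manifolds depends only on the underlying smooth structures and not on the choice of Riemannian metric, and $\bar g$ is a smooth metric conformal to $g$, one concludes that $\phi$ is smooth as a map $(M^m,g)\longrightarrow(N^n,h)$.

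The step that needs the most care is the equivalence in the first paragraph at the level of merely continuous solutions. Since an $f$-harmonic morphism is a priori only continuous, $u\circ\phi$ need not be $C^2$, so ``$f$-harmonic'' and ``harmonic'' have to be interpreted in the distributional (weak) sense, and one must verify that the two divergence-form operators $\Delta_f^M$ and $\Delta^{\bar g}$ possess exactly the same weak solutions. The divergence-form identity displayed above settles this cleanly, after which all the genuine analytic difficulty---the passage from a continuous weak solution of an elliptic system to a smooth map---is already contained in the cited regularity theorem for harmonic morphisms. It is worth noting that the exponent $2/(m-2)$, and hence the whole reduction, breaks down when $m=2$, which is precisely why the hypothesis $m\ge 3$ is imposed.
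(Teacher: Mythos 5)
Your proposal is correct and follows essentially the same route as the paper: a conformal change of the domain metric to $\bar g = f^{2/(m-2)}g$ (valid precisely because $m\ge 3$), reducing the statement to the classical Fuglede--Ishihara regularity of harmonic morphisms. In fact your rendering is the more careful one --- the paper simply cites its conformal-change corollary and the smoothness of harmonic morphisms, whereas you verify the equivalence of $\Delta_f^M$ and $\Delta^{\bar g}$ at the level of (weak) solutions, which is exactly what is needed to transfer the \emph{morphism} property (pullbacks of harmonic functions) rather than the \emph{map} property for an a priori only continuous $\phi$.
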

\begin{proof}
In fact, by Corollary \ref{C1}, if $m \ne 2$ and $\phi : (M^m, g)
\longrightarrow (N^n, h)$ is an $f$-harmonic morphism, then $\phi :
(M^m, f ^{2/(m-2)}g) \longrightarrow (N^n, h)$ is a harmonic map
 and hence a harmonic morphism, which
is known to be smooth (see, e. g., \cite{BW}).
\end{proof}
It is well known that the composition of harmonic morphisms is again
a harmonic morphism. The composition law for $f$-harmonic morphisms,
however, will need to be modified accordingly. In fact, by the
definitions of harmonic morphisms and $f$-harmonic morphisms we have
\begin{corollary}
Let $\phi : (M^m, g) \longrightarrow (N^n, h)$ be an $f$-harmonic
morphism with dilation $\lambda_1$ and $\psi : (N^n, h)
\longrightarrow (Q^l, k)$ a harmonic morphism with dilation
$\lambda_2$. Then the composition $\psi\circ\phi : (M^m, g)
\longrightarrow (Q^l, k)$ is an $f$-harmonic morphism with dilation
$\lambda_1(\lambda_2\circ \phi)$.
\end{corollary}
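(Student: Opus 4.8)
The plan is to split the statement into two parts: first that $\psi\circ\phi$ is indeed an $f$-harmonic morphism, and second that its dilation equals $\lambda_1(\lambda_2\circ\phi)$. The first part should follow immediately from the definitions by chasing a harmonic function through the two maps. Let $v : W \longrightarrow \r$ be harmonic on an open subset $W$ of $Q$ with $(\psi\circ\phi)^{-1}(W)$ non-empty. Since $\psi$ is a harmonic morphism, $v\circ\psi$ is harmonic on $\psi^{-1}(W)\subseteq N$; since $\phi$ is an $f$-harmonic morphism, $(v\circ\psi)\circ\phi$ is $f$-harmonic on $\phi^{-1}(\psi^{-1}(W))=(\psi\circ\phi)^{-1}(W)$. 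As $v$ was an arbitrary local harmonic function on $Q$, this exhibits $\psi\circ\phi$ as a map pulling back local harmonic functions on $Q$ to local $f$-harmonic functions on $M$, i.e. an $f$-harmonic morphism.

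For the dilation I would invoke characterization $(3)$ of Theorem \ref{cha}. Applied to $\phi$ with dilation $\lambda_1$ it gives $\Delta_f^M(w\circ\phi)=f\lambda_1^2(\Delta^N w)\circ\phi$ for every $C^2$ function $w$ on $N$; applied to the harmonic morphism $\psi$ with dilation $\lambda_2$ (the classical Fuglede--Ishihara identity, which is the special case $f\equiv1$ of Theorem \ref{cha}) it gives $\Delta^N(v\circ\psi)=\lambda_2^2(\Delta^Q v)\circ\psi$ for every $C^2$ function $v$ on $Q$. Taking $w=v\circ\psi$ and chaining the two identities yields
\begin{eqnarray}\notag
\Delta_f^M(v\circ\psi\circ\phi)&=&f\lambda_1^2\big(\Delta^N(v\circ\psi)\big)\circ\phi\\\notag
&=&f\lambda_1^2\big(\lambda_2^2(\Delta^Q v)\circ\psi\big)\circ\phi\\\notag
&=&f\,\lambda_1^2(\lambda_2^2\circ\phi)\,(\Delta^Q v)\circ(\psi\circ\phi).
\end{eqnarray}
Comparing this with characterization $(3)$ for $\psi\circ\phi$, whose existence is guaranteed by the first part together with the equivalence in Theorem \ref{cha}, identifies the dilation squared as $\lambda_1^2(\lambda_2^2\circ\phi)=\big(\lambda_1(\lambda_2\circ\phi)\big)^2$, so the dilation of $\psi\circ\phi$ is $\lambda_1(\lambda_2\circ\phi)$, as claimed.

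I do not expect a serious obstacle, since everything reduces to the definitions and to identity $(3)$ already established. The one point requiring care is to ensure that the harmonic-morphism identity $\Delta^N(v\circ\psi)=\lambda_2^2(\Delta^Q v)\circ\psi$ holds for \emph{all} $C^2$ functions $v$ and not merely harmonic ones; this is precisely what the $f\equiv1$ case of Theorem \ref{cha} supplies, so it is legitimate to invoke it. One should also check that the domains match, namely $(\psi\circ\phi)^{-1}(W)=\phi^{-1}(\psi^{-1}(W))$, which is routine, and that the pointwise dilation factor obtained above is the globally defined continuous dilation of the composition.
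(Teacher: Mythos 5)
Your proposal is correct, and its first half is exactly the paper's entire proof: the paper disposes of this corollary in one line, saying it follows ``by the definitions of harmonic morphisms and $f$-harmonic morphisms,'' which is precisely your chase of a local harmonic function $v$ on $Q$ through $\psi$ (yielding a harmonic function on $N$) and then through $\phi$ (yielding an $f$-harmonic function on $M$). Where you genuinely go beyond the paper is the dilation formula: the paper asserts $\lambda_1(\lambda_2\circ\phi)$ with no argument (implicitly it is the standard fact that dilations multiply under composition, read off from ${\rm d}(\psi\circ\phi)={\rm d}\psi\circ {\rm d}\phi$ on the horizontal space), whereas you derive it by chaining characterization $(3)$ of Theorem \ref{cha} for $\phi$ with the Fuglede--Ishihara identity for $\psi$ (legitimately obtained as the $f\equiv 1$ case of the same theorem). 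This route is sound and has the extra merit of reproving the morphism property at the same time via the implication $(3)\Rightarrow(1)$, so your two halves are really two independent proofs. The one step you flag but do not carry out --- that the function $\lambda_1^2(\lambda_2^2\circ\phi)$ produced by your identity is the square of the dilation of $\psi\circ\phi$ --- is easily closed: if $\mu$ denotes the dilation of $\psi\circ\phi$, then $(2)\Rightarrow(3)$ gives $\Delta_f^M(v\circ\psi\circ\phi)=f\mu^2(\Delta^Q v)\circ(\psi\circ\phi)$ for all $C^2$ functions $v$, and comparing with your identity at an arbitrary point of $M$, using a test function $v$ whose Laplacian is nonzero at the image point (e.g.\ a squared distance function in normal coordinates) and the positivity of $f$, forces $\mu^2=\lambda_1^2(\lambda_2^2\circ\phi)$ everywhere.
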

More generally, we can prove that $f$-harmonic morphisms pull back
harmonic maps to $f$-harmonic maps.
\begin{proposition}\label{P1}
Let $\phi : (M^m, g) \longrightarrow (N^n, h)$ be an $f$-harmonic
morphism with dilation $\lambda$ and $\psi : (N^n, h)
\longrightarrow (Q^l, k)$ a harmonic map. Then the composition
$\psi\circ\phi : (M^m, g) \longrightarrow (Q^l, k)$ is an
$f$-harmonic map.
\end{proposition}
\begin{proof}
It is well known (see e.g., \cite{BW}, Proposition 3.3.12) that the
tension field of the composition map is given by
\begin{align}\notag
\tau(\psi \circ \phi)=&{\rm d}\psi (\tau (\phi))+{\rm Tr}_{g}\nabla
{\rm d}\psi ( {\rm d} \phi,{\rm d} \phi),
\end{align}
from which we have the $f$-tension of the composition $\psi\circ
\phi$ given by
\begin{align}\label{ftc}
\tau_f(\psi \circ \phi)=&{\rm d}\psi (\tau_f (\phi))+f{\rm
Tr}_{g}\nabla {\rm d}\psi ( {\rm d} \phi,{\rm d} \phi).
\end{align}
Since $\phi$ is an $f$-harmonic morphism and hence a horizontally
weakly conformal $f$-harmonic map with dilation $\lambda$, we can
choose a local orthonormal frame $\{e_1,\ldots, e_n, e_{n+1},
\ldots,e_m\}$ around $p\in M$ and $\{\epsilon_1,\ldots,
\epsilon_n\}$ around $\phi(p)\in N$ so that
\begin{equation}\notag
\begin{cases}
{\rm d} \phi(e_i)=\lambda\epsilon_i,\;\;\;i=1, \ldots, n,\\
{\rm d} \phi(e_\alpha)=0, \;\;\;\alpha={n+1}, \ldots, m.
\end{cases}
\end{equation}
Using these local frames we compute
\begin{eqnarray}\notag
{\rm Tr}_{g}\nabla {\rm d}\psi ( {\rm d} \phi,{\rm d}
\phi)&=&\sum_{i=1}^m\nabla {\rm d}\psi ( {\rm d} \phi e_i,{\rm d}
\phi e_i)=\lambda^2\left(\sum_{i=1}^n\nabla {\rm d}\psi
(\epsilon_i,\epsilon_i)\right)\circ\phi\\\notag
&=&\lambda^2\tau(\psi)\circ\phi.
\end{eqnarray}
Substituting this into (\ref{ftc}) we have
\begin{align}\label{ftc2}\notag
\tau_f(\psi \circ \phi)=&f{\rm d}\psi (\tau
(\phi))+f\lambda^2\tau(\psi)\circ\phi+{\rm d}(\psi\circ\phi)({\rm
grad}\,f)\\\notag =&{\rm d}\psi (\tau_f
(\phi))+f\lambda^2\tau(\psi)\circ\phi,
\end{align}
from which the proposition follows.
\end{proof}

\begin{theorem}\label{2-1}
Let $\phi: (M^m,g)\longrightarrow (N^{n},h)$ be a horizontally
weakly conformal map with
$\varphi^{*}h=\lambda^{2}g|_{\mathcal{H}}$. Then, any two of the
following conditions imply the other one.
\begin{itemize}
\item[(1)] $\phi$ is an $f$-harmonic map and hence an $f$-harmonic morphism;
\item[(2)] ${\rm grad}(\,f\lambda^{2-n})$ is vertical;
\item[(3)] $\phi$ has minimal fibers.
\end{itemize}
\end{theorem}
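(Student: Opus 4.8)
The plan is to collapse all three conditions into a single linear identity between two horizontal vector fields, via the Baird--Eells formula for the tension field of a horizontally (weakly) conformal map. First I would pass to the open set of regular points (where ${\rm d}\phi\ne 0$, i.e. $\lambda>0$); there $\phi$ is a submersion, the fibres are genuine submanifolds, and ${\rm d}\phi|_{\mathcal H}:\mathcal H_x\to T_{\phi(x)}N$ is a conformal \emph{isomorphism}. I would then work at a fixed point $p$ with the adapted orthonormal frames $\{e_1,\dots,e_n,e_{n+1},\dots,e_m\}$ on $M$ and $\{\epsilon_1,\dots,\epsilon_n\}$ on $N$ already used in the proof of Proposition \ref{P1}, so that ${\rm d}\phi(e_i)=\lambda\epsilon_i$ and ${\rm d}\phi(e_\alpha)=0$.

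Next I would record the decomposition of the tension field of a horizontally conformal submersion into a dilation part and a fibre--mean--curvature part (see \cite{BW}),
\begin{equation}\notag
\tau(\phi)={\rm d}\phi\Big((2-n)\,{\rm grad}^{\mathcal H}\,{\rm ln}\,\lambda-(m-n)\mu^{\mathcal V}\Big),
\end{equation}
where $\mu^{\mathcal V}=\frac{1}{m-n}\sum_{\alpha=n+1}^m(\nabla_{e_\alpha}e_\alpha)^{\mathcal H}$ is the (horizontal) mean curvature vector field of the fibres. Inserting this into $\tau_f(\phi)=f\tau(\phi)+{\rm d}\phi({\rm grad}\,f)$ and using ${\rm d}\phi({\rm grad}\,f)=f\,{\rm d}\phi({\rm grad}\,{\rm ln}\,f)$, the two gradient contributions merge because $(2-n)\,{\rm ln}\,\lambda+{\rm ln}\,f={\rm ln}(f\lambda^{2-n})$, giving
\begin{equation}\notag
\tau_f(\phi)=f\,{\rm d}\phi\Big({\rm grad}^{\mathcal H}\,{\rm ln}(f\lambda^{2-n})-(m-n)\mu^{\mathcal V}\Big).
\end{equation}

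The key observation is that both bracketed vectors are horizontal and ${\rm d}\phi|_{\mathcal H}$ is injective, so $\tau_f(\phi)=0$ if and only if the horizontal field inside vanishes, i.e. ${\rm grad}^{\mathcal H}\,{\rm ln}(f\lambda^{2-n})=(m-n)\mu^{\mathcal V}$. Writing $A:={\rm grad}^{\mathcal H}\,{\rm ln}(f\lambda^{2-n})$ and $B:=(m-n)\mu^{\mathcal V}$, condition $(2)$ says exactly that ${\rm grad}(f\lambda^{2-n})$ has no horizontal component, i.e. $A=0$; condition $(3)$ (minimal fibres) says $B=0$; and by the displayed identity condition $(1)$ (that $\phi$ be $f$-harmonic, whence an $f$-harmonic morphism by Theorem \ref{cha}) says $A=B$. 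Since $\{A=B,\ A=0,\ B=0\}$ is a triple in which any two force the third, the theorem follows immediately. I would remark that the argument is uniform in $n$: the borderline $n=2$ needs no separate treatment, since the ${\rm ln}\,f$ term keeps $A$ nontrivial even when the $\lambda$-contribution to $A$ degenerates.

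The step I expect to be the main obstacle is pinning down the tension-field formula with the precise coefficient $(2-n)$ and sign in front of ${\rm grad}^{\mathcal H}\,{\rm ln}\,\lambda$: a direct computation of $\sum_i(\nabla\,{\rm d}\phi)(e_i,e_i)$ yields a bare $+{\rm grad}^{\mathcal H}\,{\rm ln}\,\lambda$ term, and the rest of the coefficient must be extracted from the horizontal component of the frame acceleration $\sum_i(\nabla_{e_i}e_i)^{\mathcal H}$, which for a horizontally conformal submersion is itself a multiple of ${\rm grad}^{\mathcal H}\,{\rm ln}\,\lambda$. I would either invoke the ready-made formula from \cite{BW} or verify the acceleration identity by choosing the target frame $\{\epsilon_i\}$ geodesic at $\phi(p)$, so that the intermediate term $\lambda^2\sum_i\nabla^N_{\epsilon_i}\epsilon_i$ drops out. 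The only other point requiring care is the degenerate locus $\lambda=0$, which I handle by arguing on the regular set and invoking continuity of $\lambda$ and of the relevant fields.
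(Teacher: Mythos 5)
Your proof is correct and follows essentially the same route as the paper: both invoke the formula $\tau(\phi)=-(m-n)\,{\rm d}\phi(\mu)+(2-n)\,{\rm d}\phi({\rm grad}\,{\rm ln}\,\lambda)$ from \cite{BW}, merge the gradient terms into $f\,{\rm d}\phi\left({\rm grad}\,{\rm ln}(f\lambda^{2-n})\right)$, and read off the three-way equivalence. Your extra care about the regular set and the injectivity of ${\rm d}\phi|_{\mathcal{H}}$ only makes explicit what the paper's terser proof leaves implicit.
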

\begin{proof}
It can be check (see e.g., \cite{BW}) that the tension field of a
horizontally weakly conformal map $\phi: (M^m,g)\longrightarrow
(N^{n},h)$  is given by
\begin{equation}\notag
\tau(\phi)=-(m-n)d\phi(\mu)+(2-n)d \phi({\rm grad}\ln \lambda),
\end{equation}
where $\lambda$ is the dilation of the horizontally weakly conformal
map $\phi$ and $\mu$ is the mean curvature vector field of the
fibers. It follows that the $f$-tension field of $\phi$ can be
written as
\begin{equation}\notag
\tau_f(\phi)=-(m-n)fd\phi(\mu)+fd \phi({\rm grad}\ln
\lambda^{2-n})+d \phi({\rm grad}\,f),
\end{equation}
or, equivalently,
 \begin{equation}\notag
\tau_f(\phi)=f[-(m-n)d\phi(\mu)+d \phi({\rm grad}\ln
(f\lambda^{2-n}))]=0.
\end{equation}
From this we obtain the theorem.
\end{proof}
An immediate consequence is the following
\begin{corollary}\label{Riem}
$(a)$ A horizontally homothetic map (in particular, a Riemannian
submersion) $\phi: (M^m,g)\longrightarrow (N^{n},h)$ is an
$f$-harmonic morphism if and only if $-(m-n)\mu+{\rm grad}\ln f$
is vertical;\\
$(b)$ A weakly conformal map $\phi: (M^m,g)\longrightarrow
(N^{m},h)$ with conformal factor $\lambda$ of same dimension spaces
is $f$-harmonic and hence an $f$-harmonic morphism if and only if
$f=C\lambda^{m-2}$ for some constant $C>0$;\\
$(c)$ An horizontally weakly conformal map $\phi:
(M^m,g)\longrightarrow (N^2,h)$ is an $f$-harmonic map and hence an
$f$-harmonic morphism if and only if $-(m-2)\mu+{\rm grad}\ln f$ is
vertical.
\end{corollary}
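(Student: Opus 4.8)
The plan is to read all three assertions off the single $f$-tension identity established in the proof of Theorem~\ref{2-1},
\[
\tau_f(\phi)=f\bigl[-(m-n)\,d\phi(\mu)+d\phi({\rm grad}\ln(f\lambda^{2-n}))\bigr],
\]
combined with two elementary properties of a horizontally weakly conformal map $\phi$: the differential $d\phi$ annihilates the vertical distribution $\mathcal{V}=\ker d\phi$, and $d\phi|_{\mathcal{H}}$ is injective (in fact conformal) wherever $\lambda\neq0$. Because $f>0$, the vanishing of $\tau_f(\phi)$ is equivalent to $d\phi\bigl(-(m-n)\mu+{\rm grad}\ln(f\lambda^{2-n})\bigr)=0$, and, since $d\phi$ kills exactly the vertical vectors and is injective on $\mathcal{H}$, this holds precisely when $-(m-n)\mu+{\rm grad}\ln(f\lambda^{2-n})$ is vertical. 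I would then recover each part by simplifying the factor $f\lambda^{2-n}$ in the relevant special case, recalling that $\mu$, the mean curvature of the fibers, is horizontal, and that by Theorem~\ref{cha} a horizontally weakly conformal $f$-harmonic map is automatically an $f$-harmonic morphism.

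For part~(a), I would use that a horizontally homothetic map has ${\rm grad}(\lambda^2)$ vertical by definition; since ${\rm grad}\ln\lambda=(2\lambda^2)^{-1}{\rm grad}(\lambda^2)$, the field ${\rm grad}\ln\lambda^{2-n}=(2-n){\rm grad}\ln\lambda$ is vertical, so $d\phi({\rm grad}\ln\lambda^{2-n})=0$ and the identity collapses to $\tau_f(\phi)=f\,d\phi\bigl[-(m-n)\mu+{\rm grad}\ln f\bigr]$. Hence $\phi$ is $f$-harmonic, and so an $f$-harmonic morphism, exactly when $-(m-n)\mu+{\rm grad}\ln f$ is vertical; a Riemannian submersion is the special case $\lambda\equiv1$.

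For part~(c) the exponent $2-n$ vanishes when $n=2$, so $\lambda^{2-n}\equiv1$ and $\ln(f\lambda^{2-n})=\ln f$; the identity again reduces to $\tau_f(\phi)=f\,d\phi[-(m-2)\mu+{\rm grad}\ln f]$ and the same reasoning yields the stated criterion. For part~(b) I would set $m=n$: this both kills the term $(m-n)\,d\phi(\mu)$ and forces the fibers to be zero-dimensional, so that $\mathcal{V}=\{0\}$ and ``vertical'' means ``zero.'' Then $\tau_f(\phi)=f\,d\phi({\rm grad}\ln(f\lambda^{2-m}))=0$ forces ${\rm grad}\ln(f\lambda^{2-m})=0$, i.e.\ $f\lambda^{2-m}$ equals a positive constant $C$, equivalently $f=C\lambda^{m-2}$; conversely this choice makes $\tau_f(\phi)=0$.

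The calculations are routine, so the only real care needed is the vertical/horizontal bookkeeping: one must keep in mind that $\mu$ is horizontal while ${\rm grad}\ln f$ generally has both components, so that $d\phi\bigl(-(m-n)\mu+{\rm grad}\ln f\bigr)=0$ automatically discards the vertical part of ${\rm grad}\ln f$ and the genuine condition is $({\rm grad}\ln f)^{\mathcal{H}}=(m-n)\mu$. I expect the only mildly delicate point to be part~(b), where one must justify that for a weakly conformal map between equidimensional manifolds the vertical distribution is trivial at regular points (there $d\phi$ is a conformal isomorphism), which is exactly what upgrades ``vertical'' to ``identically zero'' and produces the sharp normalization $f=C\lambda^{m-2}$.
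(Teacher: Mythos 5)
Your proposal is correct and takes essentially the same route as the paper: Corollary \ref{Riem} is stated there as an immediate consequence of Theorem \ref{2-1}, whose proof establishes exactly the identity $\tau_f(\phi)=f\left[-(m-n)d\phi(\mu)+d\phi({\rm grad}\ln (f\lambda^{2-n}))\right]$ that you specialize in each case. Your case-by-case bookkeeping (verticality of ${\rm grad}\ln\lambda^{2-n}$ in (a), triviality of the vertical distribution when $m=n$ in (b), and the vanishing exponent $2-n$ in (c)) is precisely the intended, unwritten argument.
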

Using the characterizations of $f$-harmonic morphisms and
$p$-harmonic morphisms and Corollary \ref{C1} we have the following
corollary which provides many examples of $f$-harmonic morphisms.
\begin{corollary}\label{pfhm}
A map $\phi: (M^m,g)\longrightarrow (N^{n},h)$ between Riemannian
manifolds is a $p$-harmonic morphism without critical points if and
only if it is an $f$-harmonic morphism with $f=|\rm d \phi|^{p-2}$.
\end{corollary}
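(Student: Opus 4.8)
The plan is to assemble the corollary from two characterizations already in hand: the stated fact that a $p$-harmonic morphism is precisely a horizontally weakly conformal $p$-harmonic map (\cite{Fu}, \cite{Is}, \cite{Lo}), and Theorem \ref{cha}, which identifies $f$-harmonic morphisms with horizontally weakly conformal $f$-harmonic maps. Since the horizontal weak conformality clause is common to both notions, the equivalence will collapse to the statement that, among horizontally weakly conformal maps without critical points, being $p$-harmonic is the same as being $f$-harmonic with $f=|d\phi|^{p-2}$, which is exactly Corollary \ref{C1} specialized to the $p$-harmonic case.

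First I would treat the forward implication. Assuming $\phi$ is a $p$-harmonic morphism with $|d\phi_x|\ne 0$ for all $x$, the cited characterization gives that $\phi$ is a horizontally weakly conformal $p$-harmonic map. The absence of critical points makes $f:=|d\phi|^{p-2}$ a smooth, strictly positive function on $M$, so Corollary \ref{C1} (applied with $F(t)=\frac{1}{p}(2t)^{p/2}$, for which $F'(\frac{|d\phi|^2}{2})=|d\phi|^{p-2}$) shows that $\phi$ is $f$-harmonic for this $f$. Being simultaneously horizontally weakly conformal, $\phi$ then satisfies condition $(2)$ of Theorem \ref{cha}, and the implication $``(2)\Longrightarrow(1)"$ of that theorem yields that $\phi$ is an $f$-harmonic morphism with $f=|d\phi|^{p-2}$.

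For the converse I would run the same chain backwards. By Theorem \ref{cha}, an $f$-harmonic morphism with $f=|d\phi|^{p-2}$ is a horizontally weakly conformal $f$-harmonic map, and for $f=|d\phi|^{p-2}$ to be an admissible element of $C^\infty(M,(0,\infty))$ one needs $|d\phi|\ne 0$ everywhere, i.e.\ the absence of critical points. Substituting $f=|d\phi|^{p-2}$ into the $f$-harmonic map equation (\ref{fhm}) produces exactly the $p$-harmonic map equation (\ref{PT}), so $\phi$ is a horizontally weakly conformal $p$-harmonic map, and hence a $p$-harmonic morphism by the same cited characterization.

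The step demanding the most care is the bookkeeping around critical points: the hypothesis $|d\phi|\ne 0$ is doing genuine work, since it is what allows $f=|d\phi|^{p-2}$ to be a positive smooth function on the class where $f$-harmonicity is defined, and it is simultaneously the nondegeneracy condition needed to invoke Corollary \ref{C1}. Once this is kept straight, the remainder is a direct identification of the two morphism notions through their shared horizontal-conformality condition and the coincidence of the governing equations (\ref{fhm}) and (\ref{PT}).
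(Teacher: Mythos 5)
Your proposal is correct and follows exactly the route the paper intends: the paper states this corollary as an immediate consequence of the Fuglede--Ishihara--Loubeau characterization of $p$-harmonic morphisms as horizontally weakly conformal $p$-harmonic maps, Theorem \ref{cha}, and Corollary \ref{C1}, which are precisely the three ingredients you assemble. Your explicit attention to the role of the no-critical-points hypothesis (making $f=|{\rm d}\phi|^{p-2}$ a smooth positive function in both directions) is a worthwhile elaboration of what the paper leaves implicit, but it is not a different argument.
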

\begin{example}
M$\ddot{\rm o}$bius transformation
$\phi:\r^{m}\setminus\{0\}\longrightarrow \r^{m}\setminus\{0\}$
defined by
\begin{equation}\notag
\phi(x)=a+\frac{r^2}{|x-a|^2}(x-a)
\end{equation}
is an $f$-harmonic morphism with $f(x)=C(\frac{r}{|x-a|})^{2(m-2)}$.
In fact, it is well known that the M$\ddot{\rm o}$bius
transformation is a conformal map between the same dimensional
spaces with the dilation $\lambda= \frac{r^2}{|x-a|^2}$.  It follows
from \cite{MV} that $\phi$ is an $m$-harmonic morphism, and hence by
Corollary \ref{pfhm}, the inversion is an $f$-harmonic morphism with
$f=|\rm d
\phi|^{m-2}=(\sqrt{m}\lambda)^{m-2}=C(\frac{r}{|x-a|})^{2(m-2)}$.
\end{example}
The next example is an $f$-harmonic morphism that does not come from
a $p$-harmonic morphism,
\begin{example}
The map from Euclidean $3$-space into hyperbolic plane $\phi:
(\r\times \r\times \r^+, ds_0^2)\longrightarrow H^2\equiv(\r\times
\{0\}\times \r^+,\frac{1}{z^2}ds_0^2)$ with
$\phi(x,y,z)=(x,0,\sqrt{y^2+z^2\,})$ is an $f$-harmonic morphism
with $f=1/z$. Similarly, we know from \cite{Gu} that the map $\phi:
H^3\equiv(\r\times \r\times \r^+,\frac{1}{z^2}ds_0^2)\longrightarrow
H^2\equiv(\r\times \{0\}\times \r^+,\frac{1}{z^2}ds_0^2)$ with
$\phi(x,y,z)=(x,0,\sqrt{y^2+z^2\,})$ is a harmonic morphism. It
follows from Example \ref{Ex2} that the map from Euclidean space
into hyperbolic plane $\phi: (\r\times \r\times \r^+,
ds_0^2)\longrightarrow H^2\equiv(\r\times \{0\}\times
\r^+,\frac{1}{z^2}ds_0^2)$ with $\phi(x,y,z)=(x,0,\sqrt{y^2+z^2\,})$
is an $f$-harmonic map with $f=1/z$. Since this map is also
horizontally weakly conformal it is an $f$-harmonic morphism by
Theorem \ref{cha}.
\end{example}
\begin{example}\label{E4}
Any harmonic morphism $\phi: (M^m,g)\longrightarrow (N^{n},h)$ is an
$f$-harmonic morphism for a positive function $f$ on $M$ with
vertical gradient, i.e., $d\phi({\rm grad} f)=0$. In particular, the
radial projection $\phi:\r^{m+1}\setminus\{0\}\longrightarrow S^m,
\;\;\phi(x)=\frac{x}{|x|}$ is an $f$-harmonic morphism for
$f=\alpha(|x|)$, where $\alpha:(0,\infty)\longrightarrow (0,\infty)$
is any smooth function. In fact, we know from \cite{BW} that the
radial projection is a harmonic morphisms and on the other hand, on
can check that the function $f=\alpha(|x|)$ is positive and has
vertical gradient.
\end{example}
Using the property of $f$-harmonic morphisms and Sacks-Uhlenberg's
well-known result on the existence of harmonic $2$-spheres we have
the following proposition which gives many examples of $f$-harmonic
maps from Euclidean $3$-space into a manifold whose universal
covering space is not contractible.

\begin{proposition}\label{P2}
For any Riemannian manifold whose universal covering space is not
contractible, there exists an $f$-harmonic map $\phi: \left(\r^3,\;
ds_0^2\right)\longrightarrow (N^n, h)$ from Euclidean $3$-space with
$f(x)=\frac{2}{1+|x|^2}$.
\end{proposition}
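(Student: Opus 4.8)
The plan is to exhibit the required map as a composition $\psi\circ\phi$, where $\phi:(\r^3,ds_0^2)\to S^2$ is an $f$-harmonic morphism carrying exactly the weight $f=\frac{2}{1+|x|^2}$ and $\psi:S^2\to(N^n,h)$ is a non-constant harmonic map supplied by Sacks--Uhlenbeck. Once both factors are in hand, Proposition \ref{P1} applies verbatim: an $f$-harmonic morphism pulls back a harmonic map to an $f$-harmonic map, so $\psi\circ\phi:(\r^3,ds_0^2)\to(N^n,h)$ is an $f$-harmonic map with the desired weight.

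First I would construct $\phi$. Recall that the Hopf fibration $H:S^3\to S^2$ is a harmonic morphism (see \cite{BW}). Stereographic projection identifies $S^3\setminus\{p\}$ with $\big(\r^3,\frac{4}{(1+|x|^2)^2}ds_0^2\big)$, and since being a harmonic morphism is a local condition, $H$ transfers to a harmonic morphism $\phi:\big(\r^3,\frac{4}{(1+|x|^2)^2}ds_0^2\big)\to S^2$. I then use that horizontal weak conformality is invariant under a conformal change of the domain metric: such a change fixes $\ker d\phi$, its orthogonal complement, and angles. Writing $\frac{4}{(1+|x|^2)^2}ds_0^2=f^2ds_0^2$ with $f=\frac{2}{1+|x|^2}$, Corollary \ref{C2} (equivalently Example \ref{Ex2}, the case $m=3$ in which $f^{2/(m-2)}=f^2$) identifies harmonicity for the round metric $f^2ds_0^2$ with $f$-harmonicity for $ds_0^2$. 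Thus $\phi$ is at once horizontally weakly conformal and $f$-harmonic as a map $(\r^3,ds_0^2)\to S^2$, and Theorem \ref{cha} certifies it as an $f$-harmonic morphism with $f=\frac{2}{1+|x|^2}$.

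Next I would invoke Sacks--Uhlenbeck's existence theorem for harmonic $2$-spheres: because the universal covering space of $N$ is not contractible, there is a non-constant harmonic map $\psi:S^2\to(N^n,h)$. Feeding $\phi$ and $\psi$ into Proposition \ref{P1} then shows that $\psi\circ\phi:(\r^3,ds_0^2)\to(N^n,h)$ is $f$-harmonic with $f=\frac{2}{1+|x|^2}$, which is exactly the assertion.

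The genuine crux is the appeal to Sacks--Uhlenbeck, specifically the passage from ``universal cover not contractible'' to an honest non-constant harmonic sphere. When $\pi_2(N)\ne 0$ this is their theorem directly; when $\pi_2(N)=0$ while some higher homotopy group is non-trivial, one must instead run their $\alpha$-energy min-max and argue that the limiting harmonic sphere cannot be constant. Everything else is bookkeeping: verifying that the conformal factor $f^2=\frac{4}{(1+|x|^2)^2}$ matches the exponent $f^{2/(m-2)}$ at $m=3$, and noting that the only hypotheses Proposition \ref{P1} requires are those already secured for $\phi$ and $\psi$. One point worth flagging is whether $N$ should be assumed compact for Sacks--Uhlenbeck to apply; if so, that hypothesis ought to be added to the statement.
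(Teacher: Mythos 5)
Your proposal is correct and follows essentially the same route as the paper: identify $\left(\r^3,\tfrac{4ds_0^2}{(1+|x|^2)^2}\right)$ with $S^3\setminus\{N\}$ via stereographic projection, view the Hopf fibration as a harmonic morphism, convert harmonicity for the conformal metric into $f$-harmonicity for $ds_0^2$ (Corollary \ref{C2}/Example \ref{Ex2}), certify the $f$-harmonic morphism property via Theorem \ref{cha}, and compose with a Sacks--Uhlenbeck harmonic sphere using Proposition \ref{P1}. As a minor bonus, you cite Corollary \ref{C2} where the paper's proof mistakenly writes Corollary \ref{C1}, and your remark about compactness of $N$ in the Sacks--Uhlenbeck step flags a hypothesis the paper leaves implicit.
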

\begin{proof}
Let $ds_0^2$ denote the Euclidean metric on $\r^3$. It is well known
that we can use the inverse of the stereographic projection to
identify $\left(\r^3,\; \frac{4ds_0^2}{(1+|x|^2)^2}\right)$ with
 $S^3\setminus
\{N\}=\{(u_1,u_2,u_3,u_4)|\sum_{i=1}^4\,u_i^2=1, \; u_4\ne 1\}$, the
Euclidean $3$-sphere minus the north pole. In fact, the
identification is given by the isometry
\begin{equation}\notag
\sigma: \left(\r^3,\;
\frac{4ds_0^2}{(1+|x|^2)^2}\right)\longrightarrow S^3\setminus
\{N\}\subseteq \r^4
\end{equation}
with $\sigma(x_1,x_2, x_3)=(\frac{2x_{1}}{1+|x|^2},\;
\frac{2x_{2}}{1+|x|^2}, \;\frac{2x_{3}}{1+|x|^2}, \;
\frac{|x|^2-1}{1+|x|^2})$. One can check that under this
identification, the Hopf fiberation $\phi: \left(\r^3,\;
\frac{4ds_0^2}{(1+|x|^2)^2}\right)\cong S^3\setminus
\{N\}\longrightarrow S^2$ can be written as
\begin{eqnarray}\notag
\phi(x_1,x_2, x_3)= (|z|^2-|w|^2, 2zw),
\end{eqnarray}
where $z= \frac{2x_{1}}{1+|x|^2}+i \frac{2x_{2}}{1+|x|^2}, \;\;
w=\frac{2x_{3}}{1+|x|^2}+i \frac{|x|^2-1}{1+|x|^2}$. It is well
known (see, e.g., \cite{BW}) that the Hopf fiberation $\phi$ is a
harmonic morphism with dilation $\lambda=2$. So, by Corollary
\ref{C1}, $\phi: \left(\r^3,\; ds_0^2\right)\longrightarrow S^2$ is
an $f$-harmonic map with $f=\frac{2}{1+|x|^2}$. It is easy to see
that this map is also horizontally conformal submersion and hence,
by Theorem \ref{cha}, it is an $f$-harmonic morphism. On the other
hand, by a well-known result of Sacks-Uhlenbeck's, we know that
there is a harmonic map $\rho: S^2 \longrightarrow (N^n,h)$ from
$2$-sphere into a manifold whose covering space is not contractible.
It follows from Proposition \ref{P1} that the composition $\rho\circ
\phi: \left(\r^3,\; ds_0^2\right)\longrightarrow (N^n,h)$ is an
$f$-harmonic map with $f=\frac{2}{1+|x|^2}$.
\end{proof}
\begin{remark}
We notice that the authors in \cite{Co2} and \cite{HT} used heat
flow method to study the existence of $f$-harmonic maps from closed
unit disk $D^2\longrightarrow S^2$ sending boundary to a single
point. The $f$-harmonic morphism $\phi: \left(\r^3,\;
ds_0^2\right)\longrightarrow S^2$ in Proposition \ref{P2} clearly
restrict to an $f$-harmonic map $\phi: \left(D^3,\;
ds_0^2\right)\longrightarrow S^2$ from 3-dimensional open disk into
$S^2$. It would be interesting to know if there is any $f$-harmonic
map from higher dimensional closed disk into two sphere. Though we
know that $\phi: (M^m,g)\longrightarrow (N^n,h)$ being $f$-harmonic
implies $\phi: (M^m,f^{\frac{2}{m-2}}g)\longrightarrow (N^n,h)$
being harmonic we need to be careful trying to use results from
harmonic maps theory since a conformal change of metric may change
the curvature and the completeness of the original manifold $(M^m,
g)$.
\end{remark}
As we remark in Example \ref{E4} that any harmonic morphism is an
$f$-harmonic morphism provided $f$ is positive with vertical
gradient, however, such a function need  not always exist as the
following proposition shows.
\begin{proposition}\label{PL}
A Riemannian submersion $\phi: (M^m,g)\longrightarrow (N^n, h)$ from
non-negatively curved compact manifold with minimal fibers is an
$f$-harmonic morphism if and only if $f=C>0$. In particular, there
exists no nonconstant positive function on $S^{2n+1}$ so that the
Hopf fiberation $\phi: S^{2n+1}\longrightarrow (N^n, h)$ is an
$f$-harmonic morphism.
\end{proposition}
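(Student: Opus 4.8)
\noindent The plan is to reduce the statement, via the classification already established, to a question about a single function $f$ on $M$ and then to attack that by a Bochner argument. Since $\phi$ is a Riemannian submersion it is horizontally homothetic with dilation $\lambda\equiv 1$, and the hypothesis that the fibers are minimal means the mean curvature vector $\mu$ of the fibers vanishes. Thus Corollary \ref{Riem}$(a)$ applies and says that $\phi$ is an $f$-harmonic morphism if and only if ${\rm grad}\,\ln f$ is vertical, i.e. if and only if $d\phi({\rm grad}\,f)=0$. The ``if'' part is then immediate: when $f=C$ is a positive constant, ${\rm grad}\,\ln f=0$ is trivially vertical, so $\phi$ is an $f$-harmonic morphism (this is just the classical fact that a Riemannian submersion with minimal fibers is a harmonic morphism). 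Hence the whole content is the ``only if'' direction: under the curvature and compactness hypotheses, ${\rm grad}\,f$ being vertical should force $f$ to be constant.

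To extract constancy I would first compute $\Delta^M f$ in an adapted orthonormal frame $\{X_1,\dots,X_n,U_{n+1},\dots,U_m\}$ with the $X_a$ horizontal and the $U_\alpha$ vertical. Writing $W={\rm grad}\,f$ (vertical), the horizontal contribution to ${\rm div}\,W$ is $\sum_a\langle\nabla_{X_a}W,X_a\rangle=-\sum_a\langle W,(\nabla_{X_a}X_a)^{\mathcal V}\rangle$, and since the O'Neill integrability tensor of a Riemannian submersion satisfies $A_XX=\tfrac12[X,X]^{\mathcal V}=0$ we get $\sum_a(\nabla_{X_a}X_a)^{\mathcal V}=0$; together with minimality of the fibers this identifies $\Delta^M f$ with the fibrewise Laplacian of $f|_{\rm fiber}$. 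Then I would feed $W$ into the Bochner formula
\begin{equation}\notag
\tfrac12\,\Delta^M|W|^2=|\nabla W|^2+\langle W,{\rm grad}\,\Delta^M f\rangle+{\rm Ric}^M(W,W),
\end{equation}
integrate over the compact manifold $M$, and use ${\rm Ric}^M\ge0$ together with $\int_M\langle W,{\rm grad}\,\Delta^M f\rangle=-\int_M(\Delta^M f)^2$ to try to conclude $\nabla W=0$; a nontrivial parallel gradient cannot exist on a compact manifold, so this would give $W=0$ and $f={\rm const}$.

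The step I expect to be the main obstacle is precisely closing this Bochner estimate. Integration by parts only yields $\int_M(\Delta^M f)^2=\int_M|\nabla W|^2+\int_M{\rm Ric}^M(W,W)$, and with ${\rm Ric}^M\ge0$ alone this does \emph{not} force $\nabla W=0$. The difficulty is structural: the vertical-gradient condition permits $f$ to vary along the fibers, and by the computation above the ambient curvature of $M$ does not even enter the expression for $\Delta^M f$. Consequently the non-negative curvature hypothesis must be brought in decisively and rather subtly, presumably through the interplay between the fibre geometry and the integrability tensor $A$; this is where the real work lies, and where one must check that the hypothesis is genuinely strong enough.

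For the stated application to the Hopf fibration $\phi:S^{2n+1}\longrightarrow(N^n,h)$ I would argue directly, bypassing the Bochner difficulty. Here the fibers are the closed orbits of the unit Hopf (Reeb) field $V$; they are great circles, hence totally geodesic and so minimal, and $S^{2n+1}$ has constant curvature $1\ge0$. The vertical-gradient condition reads ${\rm grad}\,f=\psi\,V$ for some function $\psi$, equivalently $df=\psi\,\alpha$ where $\alpha=V^{\flat}$ is the contact form. Taking the exterior derivative gives $0=d^2f=d\psi\wedge\alpha+\psi\,d\alpha$; restricting to the contact (horizontal) distribution, where $\alpha=0$ while $d\alpha$ is nondegenerate, forces $\psi\equiv0$ and hence $df=0$. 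Thus $f$ is constant, which is the ``in particular'' assertion.
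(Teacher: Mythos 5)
Your reduction via Corollary \ref{Riem}(a) coincides with the paper's first step, and your treatment of the ``if'' direction is fine; but the proposal has a genuine gap: the first (main) statement of the proposition is never proved. You set up a Bochner argument for $W={\rm grad}\,f$ and then candidly record that it fails --- integrating the Bochner identity over compact $M$ only yields $\int_M(\Delta^M f)^2=\int_M|\nabla W|^2+\int_M{\rm Ric}(W,W)$, from which nothing follows when ${\rm Ric}\ge 0$ --- so what remains open is exactly the assertion to be proved. The idea you are missing is not a sharper intrinsic identity but a conformal perturbation combined with an extrinsic rigidity theorem. Supposing ${\rm grad}\,f$ is vertical and nonconstant, the paper considers $\phi:\left(M,e^{2\varepsilon f}g\right)\longrightarrow(N,h)$; verticality of ${\rm grad}\,f$ makes this a \emph{horizontally homothetic} submersion with dilation $\lambda^{2}=e^{-2\varepsilon f}$, for $\varepsilon$ small enough the perturbed metric is still positively curved (compactness is used here), and the main theorem of \cite{OW} says that a horizontally homothetic submersion of a compact non-negatively curved manifold must be a Riemannian submersion, i.e.\ must have constant dilation; hence $f$ is constant. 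Some such external input is unavoidable: no Bochner-type argument using only ${\rm Ric}\ge 0$ can work.

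In fact your closing suspicion that the hypothesis may not be ``genuinely strong enough'' is well founded and pinpoints a real defect: the Lemma the paper proves (and needs) assumes \emph{positive} curvature, not the non-negative curvature stated in the proposition, and under mere non-negative curvature the statement is false. For the flat torus projection $\phi:T^2=S^1\times S^1\longrightarrow S^1$ --- a Riemannian submersion with totally geodesic, hence minimal, fibers --- any nonconstant positive $f$ depending only on the fiber coordinate has vertical gradient, so by Corollary \ref{Riem}(a) it makes $\phi$ an $f$-harmonic morphism with nonconstant $f$. By contrast, your direct argument for the Hopf fibration ($df=\psi\,\alpha$ with $\alpha$ the contact form, so $0=d^2f=d\psi\wedge\alpha+\psi\,d\alpha$, and nondegeneracy of $d\alpha$ on the contact distribution forces $\psi\equiv 0$) is correct; it is a genuinely different, self-contained proof of the ``in particular'' assertion that bypasses both the curvature hypothesis and the appeal to \cite{OW}, but it covers only the Hopf case and does not extend to the general statement.
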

\begin{proof}
By Corollary \ref{Riem}, a Riemannian submersion $\phi:
(M^m,g)\longrightarrow (N^n, h)$ with minimal fibers is an
$f$-harmonic morphism if and only if  ${\rm grad}\ln f$ is vertical,
i.e., ${\rm d}\phi({\rm grad\,ln}\,f)=0$. This, together with the
following lemma will complete the proof of the  proposition.\\

{\bf Lemma:} Let $\phi :(M^m,g)\longrightarrow (N^n, h)$ be any
Riemannian submersion of a compact positively curved manifold $M$.
Then, there exists no (nonconstant) function $f:M\longrightarrow
\mathbb{R}$ such that ${\rm d}\phi({\rm grad\,ln}\,f=0)$.\\

{\bf Proof of the Lemma:} Suppose $f:(M^m,g)\longrightarrow
\mathbb{R}$ has vertical gradient. Consider
\begin{equation}\notag
\left( M,e^{\varepsilon f}g\right)
\end{equation}
where $\varepsilon >0$ is a sufficiently small constant.

If $\varepsilon $ is small enough, then $e^{2\varepsilon f}g$ is
positively curved. One can check that
\begin{equation}\label{gdf}
\phi :\left( M,e^{2\varepsilon f}g\right) \longrightarrow (N, h)
\end{equation}
is a horizontally homothetic submersion with dilation $\lambda^2=
e^{-2\varepsilon f}$ since $f$ has vertical gradient. By the main
theorem in \cite{OW} we conclude that the map $\phi $ defined in
(\ref{gdf}) is a Riemannian submersion, which implies that the
dilation and hence the function $f $ has to be a constant.
\end{proof}
\begin{remark}
It would be very interesting to know if there exists any
$f$-harmonic morphism (or $f$-harmonic map) $\phi:
S^{2n+1}\longrightarrow (N^n, h)$ with non-constant $f$. Note that
for the case of $n=2$, the problem of classifying all $f$-harmonic
morphisms $\phi: (S^{3}, g_0)\longrightarrow (N^2, h)$ (where $g_0$
denotes the standard Euclidean metric on the 3-sphere) amounts to
classifying all harmonic morphisms $\phi: (S^{3},
f^2g_0)\longrightarrow (N^2, h)$ from conformally flat $3$-spheres.
A partial result on the latter problem was given in \cite{He} in
which the author proves that a submersive harmonic morphism $\phi:
(S^{3}, f^2g_0)\longrightarrow (N^2, h)$ with non-vanishing
horizontal curvature is the Hopf fiberation up to an isometry of
$(S^3, g_0)$. This implies that there exists no submersive
$f$-harmonic morphism $\phi: (S^{3}, g_0)\longrightarrow (N^2, h)$
with non-constant $f$ and the horizontal curvature
$K_{\mathcal{H}}(f^2g_0)\ne0$.
\end{remark}
\begin{proposition}
For $m > n \ge 2$, a polynomial map $($i.e. a map whose component
functions are polynomials$)$ $\phi : \r^m\longrightarrow \r^n$ is an
$f$-harmonic morphism if and only if $\phi $ is a harmonic morphism
and $f$ has vertical gradient.
\end{proposition}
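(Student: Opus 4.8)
The reverse implication is immediate: if $\phi$ is a harmonic morphism and $\operatorname{grad} f$ is vertical then $\phi$ is an $f$-harmonic morphism by Example~\ref{E4}. So all the content lies in the forward implication, and the plan is as follows. Assume $\phi$ is an $f$-harmonic morphism. By Theorem~\ref{cha} it is a horizontally weakly conformal $f$-harmonic map; since the target is flat $\mathbb{R}^n$, writing $\phi=(\phi^1,\dots,\phi^n)$ this means $\langle\nabla\phi^\alpha,\nabla\phi^\beta\rangle=\lambda^2\delta^{\alpha\beta}$ for all $\alpha,\beta$, together with the $f$-harmonic map equation in divergence form, $\operatorname{div}(f\nabla\phi^\alpha)=f\Delta\phi^\alpha+\langle\nabla f,\nabla\phi^\alpha\rangle=0$ for each $\alpha$.

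The first step is a reduction showing that it is enough to prove that $\phi$ is a harmonic map. On the dense submersive set the vectors $\nabla\phi^\alpha$ are pairwise orthogonal of common length $\lambda$, so the equation $\Delta\phi^\alpha=-\langle\nabla\ln f,\nabla\phi^\alpha\rangle$ yields $|\tau(\phi)|^2=\lambda^2\,|\nabla^{H}\ln f|^2$, where $\nabla^{H}$ denotes the horizontal component. Hence $\tau(\phi)=0$ if and only if $\nabla^{H}\ln f=0$, i.e.\ if and only if $\operatorname{grad} f$ is vertical. Consequently, once $\tau(\phi)=0$ is established, $\phi$ is simultaneously a harmonic map that is horizontally weakly conformal --- hence a harmonic morphism --- and a map with vertical $\operatorname{grad} f$, which is exactly what we want.

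The second step brings in polynomiality to set up the decisive identity. For nonconstant $\phi$ (the constant case being trivially harmonic), horizontal weak conformality forces every component to be nonconstant with $|\nabla\phi^\alpha|^2=\lambda^2$ a single fixed polynomial; thus all components share a common degree $d$, $\lambda^2$ is a polynomial of degree $2d-2$, and each $\Delta\phi^\alpha$ is a polynomial of degree at most $d-2$. Solving the divergence-form equations for the horizontal gradient of $\ln f$ gives the key relation $\nabla^{H}\ln f=-\lambda^{-2}\sum_\alpha(\Delta\phi^\alpha)\nabla\phi^\alpha=:W$, whose right-hand side is a rational vector field depending on $\phi$ alone and decaying like $|x|^{-1}$ at infinity. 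The problem is thereby reduced to a statement about $\phi$: a globally defined smooth $f>0$ with $\nabla^{H}\ln f=W$ exists by hypothesis, and I must show this forces $W\equiv0$, equivalently $\Delta\phi^\alpha\equiv0$ for all $\alpha$.

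To conclude I would pass to the leading homogeneous parts of the $\phi^\alpha$ and analyse the top-order behaviour of $W$, using that $\ln f$ is a globally defined smooth potential whose gradient is curl-free, extends smoothly across the critical set $\{\lambda=0\}$, and has the prescribed decay; this should obstruct the existence of such a potential unless $W\equiv0$. An alternative finish is to integrate the conservation laws $\operatorname{div}(f\nabla\phi^\alpha)=0$ (and the tension-field decomposition for horizontally weakly conformal maps from the proof of Theorem~\ref{2-1}) against polynomial multipliers over large balls, comparing growth rates where the hypothesis $m>n$ enters through the fiber term. I expect precisely this last step --- converting the global solvability and decay of $\nabla^{H}\ln f=W$ into $\tau(\phi)\equiv0$ --- to be the main obstacle, because $f$ is a priori only smooth and positive and is entirely unconstrained along the fibers, so it is the polynomial rigidity of $\phi$ together with $m>n$ that must do the work.
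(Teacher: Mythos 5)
Your framing is sound and, where it overlaps with the paper, correct: the reverse direction via Example~\ref{E4}, the use of Theorem~\ref{cha} to reduce the forward direction to a horizontally weakly conformal $f$-harmonic map, and the observation that once $\tau(\phi)=0$ is known the $f$-harmonic map equation (\ref{fhm}) immediately forces ${\rm d}\phi({\rm grad}\,f)=0$ --- that last point is exactly how the paper concludes. But there is a genuine gap at the decisive step. Everything hinges on proving that a horizontally weakly conformal \emph{polynomial} map $\phi:\r^m\longrightarrow\r^n$ is harmonic, and you never prove it: you reformulate it as showing $W\equiv 0$ for the rational field $W=-\lambda^{-2}\sum_\alpha(\Delta\phi^\alpha)\nabla\phi^\alpha$, then offer two speculative strategies (leading-part analysis of a potential with decay; integrating the conservation laws ${\rm div}(f\nabla\phi^\alpha)=0$ against polynomial multipliers over large balls) and explicitly concede that this conversion is ``the main obstacle.'' A proof that stops at its own main obstacle is incomplete.

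The paper closes precisely this gap by citation: the theorem of Ababou--Baird--Brossard \cite{ABB} asserts that any horizontally weakly conformal polynomial map between Euclidean spaces is harmonic. That is a substantial theorem (the content of the cited Math.\ Z.\ paper), not something recoverable from the soft growth and potential-theoretic considerations you sketch. Note also a structural point: the ABB result uses only polynomiality and horizontal weak conformality of $\phi$; the existence of the global positive function $f$, which your sketch tries to exploit, is of little help for exactly the reason you yourself identify --- the equation $\nabla^{H}\ln f=W$ constrains only the horizontal projection of ${\rm grad}\,\ln f$ relative to a distribution that varies with $x$ and degenerates on the critical set, so it imposes essentially no integrability (curl-type) condition on $W$ itself, and $f$ is arbitrary along the fibers. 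If you want a self-contained argument you would have to reprove the ABB theorem; otherwise the correct move is to invoke it, after which your first reduction finishes the proof exactly as the paper does.
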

\begin{proof}
Let $\phi : \r^m\longrightarrow \r^n$ be a polynomial map $($i.e. a
map whose component functions are polynomials$)$. If $\phi$ is an
$f$-harmonic morphism, then, by Theorem \ref{cha}, it is a
horizontally weakly conformal $f$-harmonic map. It was proved in
\cite{ABB} that any horizontally weakly conformal polynomial map
between Euclidean spaces has to be harmonic. This implies that
$\phi$ is also a harmonic morphism, and in this case we have ${\rm
d}\phi({\rm grad}\,f)=0$ from (\ref{fhm}).
\end{proof}
\begin{example}
$\phi:\r^3\cong\r\times \mathbb{C}\longrightarrow \mathbb{C}$ with
$\phi(t,z)=p(z)$, where $p(z)$ is any polynomial function in $z$, is
an $f$-harmonic morphism with $f(t,z)=\alpha(t)$ for any positive
smooth function $\alpha$.
\end{example}

Finally, we would like to point out that our notion of $f$-harmonic
morphisms should not be confused with $h$-harmonic morphisms studied
in \cite{Fu} and \cite{BG}, where an $h$-harmonic function is
defined to be a solution of $\Delta\, u + 2g({\rm grad}\ln h), {\rm
grad}(u)) = 0$ (or equivalently, $h\Delta\, u + 2{\rm d} u({\rm
grad}\, h) = 0$), and an $h$-harmonic morphism is a continuous map
between Riemannian manifolds which pulls back local harmonic
functions to $h$-harmonic functions.

\begin{ack}
I am very grateful to F. Wilhelm for some useful conversations
during the preparation of the paper, especially, I would like to
thank him for offering the proof of the Lemma in the proof of
Proposition \ref{PL}.
\end{ack}


\begin{thebibliography}{99}
\bibitem{ABB} R. Ababou, P. Baird, and J. Brossard, {\em Polyn$\hat{\rm o}$mes semi-conformes et
morphismes harmoniques} Math. Z. 231 (1999), no. 3, 589--604.
\bibitem{Ar} M. Ara,  {\em Geometry of $F$-harmonic maps} Kodai Math.
J. 22 (1999), no. 2, 243--263.
\bibitem{BG} P. Baird and  S. Gudmundsson, {\em $p$-Harmonic maps and
minimal submanifolds}, Math. Ann., 294 (1992), 611-624.
\bibitem{BW} P. Baird and  J. C. Wood, {\em Harmonic morphisms between Riemannian manifolds}, London Math. Soc. Monogr.
(N.S.) No. 29, Oxford Univ. Press (2003).
\bibitem{Co1} N. Course, {\em f-harmonic maps}, Thesis, University of Warwick,
Coventry, CV4 7AL, UK, 2004.
\bibitem{CSW} J. Cie$\acute{\rm s}$li$\acute{\rm n}$ski, A. Sym, and W.
Wesselius, {\em On the geometry of the inhomogeneous Heisenberg
ferromagnet: non-integrable case}, J. Phys. A. Math. Gen. 26 (1993)
1353-1364. \bibitem{CGS} J. Cie$\acute{\rm s}$li$\acute{\rm n}$ski,
P. Goldstein, and A. Sym, {\em On integrability of the inhomogeneous
Heisenberg ferromagnet model: Examination of a new test}, J. Phys.
A: Math. Gen., 1994, 27: 1645-1664.
\bibitem{Co2} N. Course, {\em f-harmonic maps which map the boundary of the domain
to one point in the target}, New York J. Math. 13 (2007), 423-435.
\bibitem{DPL} M. Daniel, K. Porsezian, and M. Lakshmanan, {\em On the integrability of the
inhomogeneous spherically symmetric Heisenberg ferromagnet in
arbitrary dimension}, J. Math Phys, 1994, 35(10): 6498–6510.
\bibitem{EL} J. Eells
and L. Lemaire,  {\em A report on harmonic maps}, Bull. London Math.
Soc. 10 (1978), 1-68.
\bibitem{Fu} B. Fuglede,  {\em Harmonic morphisms between Riemannian manifolds}, Ann. Inst. Fourier (Grenoble),
vol {\bf 28} (1978), 107-144.
\bibitem{Gu} S. Gudmundsson, {\em The geometry of harmonic
morphisms}, Ph. D. Thesis, University of Leeds, UK, 1992.
\bibitem{He} S. Heller, {\em Harmonic morphisms on conformally flat
3-spheres}, Bull. London Math. Soc., doi:10.1112/blms/bdq089, to
appear.
\bibitem{HT} P. Huang and H. Tang, {\em On the heat flow of $f$-harmonic maps from $D^2$ into $S^2$},
Nonlinear Anal. 67 (2007), no. 7, 2149–2156.
\bibitem{Is} T. Ishihara, {\em A mapping of Riemannian manifolds
which preserves harmonic functions}, J. Math. Kyoto Univ., 19(2)
(1979), 215-229.
\bibitem{LB} M. Lakshmanan and R. K. Bullough, {\em Geometry of generalised nonlinear
Schr$\ddot{\rm o}$dinger and Heisenberg ferromagnetic spin equations
with x-dependent coefficients}, Phys. Lett. A, 1980, 80(4): 287–292.
\bibitem{LW} Y. X. Li and Y. D. Wang, {\em Bubbling location for $f$-harmonic maps
and inhomogeneous Landau–-Lifshitz equations}, Comment. Math. Helv.
81 (2) (2006) 433–448.
\bibitem{Li} A. Lichnerowicz, {\em Applications harmoniques et
vari$\acute{\rm e}$t$\acute{\rm e}$s k$\ddot{\rm a}$hleriennes},
Symposia Mathematica III, Academic Press, London, 1970, pp.
341--402.
\bibitem{Lo} E. Loubeau, {\em On $p$-harmonic morphisms}, Diff. Geom. and its appl., 12(2000),
219-229.
\bibitem{MV} J. Manfredi and V. Vespri, {\em n-harmonic morphisms
in space are M$\ddot{\rm o}$bius transformations}, Michigan Math. J.
41 (1994), 135–142.
\bibitem{Ou1} Y.-L. Ou, {\em p-Harmonic morphisms, minimal foliations,
and rigidity of metrics}, J. Geom. Phys. 52 (4) (2004) 365–381.
\bibitem{Ou2} Y.-L. Ou, {\em On $p$-harmonic morphisms and conformally flat Spaces},
Math. Proc. Camb. Phil. Soc. (2005), 139-317.
\bibitem{Ou4} Y.-L. Ou, {\em p-Harmonic morphisms, biharmonic morphisms,
and nonharmonic biharmonic maps}, J. of Geo. Phy, Vol. 56, No. 3,
2006, 358-374.
\bibitem{OW} Y.-L. Ou and F. Wilhelm, {\em Horizontally homothetic submersions and nonnegative curvature}.
Indiana Univ. Math. J. 56 (2007), no. 1, 243–261.
\bibitem{OND} S. Ouakkas, R. Nasri, and M. Djaa,  {\em On the $f$-harmonic and $f$-biharmonic
maps}, JP J. Geom. Topol. 10 (1), (2010), 11-27.
\bibitem{Ta} H. Takeuchi, {\em Some conformal properties of $p$-harmonic maps
and regularity for sphere-valued $p$-harmonic maps }, J. Math. Soc.
Japan, 46(1994), 217-234.
\end{thebibliography}
\end{document}